\newtheorem{hypothesis}{Hypothesis}
\def\BState{\State\hskip-\ALG@thistlm}
\definecolor{royalblue}{rgb}{0.0, 0.0, 0.0}
\newcommand{\bsp}[3]{\textbf{bsp}(#1)^{#2\times #2}_{#3}}
\newtheorem{remark}{Remark}[section]
\title{``Compress and eliminate'' solver for symmetric positive definite sparse matrices\footnotemark[4]}
\author{Daria A.~Sushnikova\footnotemark[5] \and Ivan V.~Oseledets\footnotemark[3]\ \footnotemark[5]}
\begin{document}

\maketitle
\renewcommand{\thefootnote}{\fnsymbol{footnote}}

\footnotetext[3]{Skolkovo Institute of Science and Technology,
Nobel St.~3, Skolkovo Innovation Center, Moscow, 143025
Moscow Region, Russia (i.oseledets@skolkovotech.ru)}
\footnotetext[5]{Institute of Numerical Mathematics Russian Academy of Sciences,
Gubkina St. 8, 119333 Moscow, Russia}
\footnotetext[4]{The work was supported by Russian Foundation of Basic Research grant 17-01-00854.}

\renewcommand{\thefootnote}{\arabic{footnote}}

\begin{abstract}
We propose a new approximate factorization for solving linear systems with symmetric positive definite sparse matrices.
In a nutshell the algorithm is to apply hierarchically block Gaussian elimination and additionally compress the fill-in.
The systems that have efficient compression of the fill-in mostly arise from discretization of partial differential equations.
We show that the resulting factorization can be used as an efficient preconditioner and compare the proposed approach with the state-of-art direct and iterative solvers.

\end{abstract}

\begin{keywords}

Sparse matrix, direct solver, hierarchical matrix, symmetric positive-definite matrix
\end{keywords}

\begin{AMS}
65F05,	65F50, 65F08
\end{AMS}

\pagestyle{myheadings} \thispagestyle{plain}

\section{Introduction}
We consider a linear system with a large sparse symmetric positive definite (SPD) matrix $A = A^{\top} > 0$, $A \in \mathbb{R}^{N \times N}$,
$$Ax = b. $$
Such systems typically come from the discretization of partial differential equations. It is well known that sparse Cholesky factorization has $\mathcal{O}(N)$ complexity only for very simple problems (for example, discretization of one-dimensional PDEs). As an alternative, hierarchical low-rank approximations have been used to approximate big dense blocks in the Cholesky factors. These methods can achieve optimal complexity for large classes of sparse linear systems. However the constant in $\mathcal{O}(N)$ can be quite large, thus for practically interesting values of $N$ they often lose to ``purely sparse'' software  (CHOLMOD, PARDISO, UMFPACK).

In this paper we try to combine those approaches and propose a new algorithm for (approximate) Cholesky-type factorization of sparse matrices. We start the standard block Gaussian elimination, but after each step we apply row and column transformation to the new fill-ins in the so-called ``far zone'' to create new zeros.  In this scheme we have discovered experimentally that it is possible to eliminate approximately half of the unknowns without introducing new non-zeros. The approach is then applied in the multilevel fashion. The most time-consuming part of the method is low-rank approximation of
small matrices, which has to be done after each elimination step.
Then we show that the resulting factorization can be used as an efficient preconditioner, and compare the efficiency with the state-of-the-art fast direct sparse solver CHOLMOD on a model example.
\section{Algorithm}
Before running the algorithm we select the permutation $P$ and work with the matrix {\color{royalblue}
\begin{equation}
A_0 = PAP^{\top}.
\label{eq:init_perm}
\end{equation}} This permutation is crucial for the algorithm. The choice of the permutation, its properties and its influence on the proposed algorithm is discussed in Section \ref{sec:complan}. The permutation $P$ and the block size $B$ define the division of the matrix $A$ into sub-blocks. The block size $B$ is the parameter of the algorithm and is typically small, the selection of the block size is discussed later. \\

\begin{figure}[H]
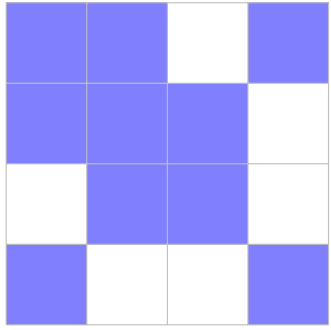

\centering
\resizebox{.34\textwidth}{!}{
  \tikz{
	\foreach \i in {0,...,3}{
		\fill[blue!50!white] (\i,3-\i) rectangle (\i+1,4-\i);
	}
	\fill[blue!50!white] (0,2) rectangle (1,3);
	\fill[blue!50!white] (0,0) rectangle (1,1);
	\fill[blue!50!white] (1,1) rectangle (2,2);
	\fill[blue!50!white] (1,3) rectangle (2,4);
	\fill[blue!50!white] (2,2) rectangle (3,3);
	\fill[blue!50!white] (3,3) rectangle (4,4);
	\draw[step=1cm,gray!50!white] (0,0) grid (4,4);

}}
\caption{Matrix $A_0$}
\label{fig:a0}
\end{figure}
We represent the matrix {\color{royalblue}$A_0$} in the \emph{compressed sparse block} (CSB) \cite{saad-csb-1994} format.
The nonzero blocks are referred to as ``close'' blocks. The nonzero blocks that arise during the elimination are called ``far'' blocks{\footnote{{``Far'' blocks are typically called ``fill'' blocks, but we use ``close'' and ``far'' notation to emphasize the connection with the FMM method.}}}. The first step is to eliminate the first block row.

\subsection{Elimination of the first block row}
Consider the matrix {\color{royalblue}$A_0$} in the following block form:

$${\color{royalblue}A_0} =
\begin{bmatrix}
D_1 &A_{1r}  \\
A_{1r}^{\top}  & A_{1*}
\end{bmatrix},
 $$
where $D_1$ is $B \times B$. The first block row $\widetilde{R}_1$ can be written as
$$
   \widetilde{R}_1 =  \begin{bmatrix}D_1 & A_{1r} \end{bmatrix} = \begin{bmatrix}D_1 & C_1 & 0 \end{bmatrix} P_1^{\text{col}},
$$
where $C_1$ corresponds to all close blocks. The matrix $P_1^{\text{col}}$ is a permutation matrix.
To get the first step of the block Cholesky factorization we factorize the diagonal block:
$$
D_1 = \widehat{L}_1 \widehat{L}_1^{\top}.
$$
Then we apply the block Cholesky factorization \cite{Davis-cholmod-2008}:

\begin{equation}
\label{eq:bl_chol}
{\color{royalblue}A_0} = \begin{bmatrix}
\widehat{L}_1 & 0 \\
 A_{1r}^{\top}\widehat{L}_1^{-1}  & 0
\end{bmatrix}  \begin{bmatrix}
\widehat{L}_1^{\top} & \widehat{L}_1^{-\top}A_{1r} \\
0  & 0
\end{bmatrix} + \begin{bmatrix}
0 & 0 \\
0  & \widetilde{A}_1^*
\end{bmatrix}=  \widetilde{L}_1\widetilde{L}_1^{\top} + \widetilde{A}_1,
\end{equation}
where the block  $\widetilde{A}_1^* = A_{1*} - A_{1r}^{\top}D_1^{-1}A_{1r}$ is the Schur complement.
The matrix $\widetilde{A}_1^*$ has the sparsity pattern equal to the union of sparsity patterns of the matrix $A_{1*}$  and the fill-in matrix
\begin{equation}
\widetilde{A}_{1F} = A_{1r}^{\top} D_1^{-1}A_{1r}.
\label{eq:A_st}
\end{equation}
New non-zero blocks can appear in positions $(i,j)$, where $i \in \mathcal{I}_1, j \in \mathcal{I}_1${, and $ \mathcal{I}_1$ is the set of indices} of nonzero blocks in the first block row $\widetilde{R}_1,$ see Figure  \ref{fig:fill0}.
\begin{figure}[H]
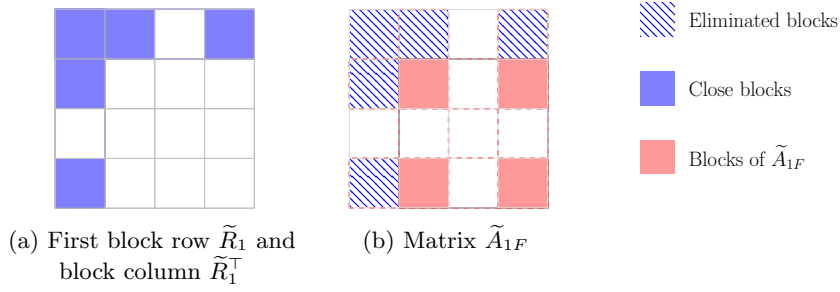

\begin{subfigure}[t]{.3\textwidth}
\centering
\resizebox{.7\textwidth}{!}{
  \tikz{
	\fill[blue!50!white] (0,3) rectangle (1,4);
	\fill[blue!50!white] (0,2) rectangle (1,3);
	\fill[blue!50!white] (0,0) rectangle (1,1);
	\fill[blue!50!white] (1,3) rectangle (2,4);
	\fill[blue!50!white] (3,3) rectangle (4,4);
    \draw[blue] (0,0) rectangle (1,4);
	\draw[blue] (0,3) rectangle (4,4);
	\draw[step=1cm,gray!50!white] (0,0) grid (4,4);
	\draw[step=1cm,gray!50!white] (0,0) grid (4,4);
}}
    \caption{ \centering First block row $\widetilde{R}_1$ and block column $\widetilde{R}_1^{\top}$}
\end{subfigure}%
\begin{subfigure}[t]{.3\textwidth}
\centering
\resizebox{.7\textwidth}{!}{
\tikz{
	\draw[white, pattern=north west lines, pattern color=blue] (0,3) rectangle (1,4);
	\draw[white, pattern=north west lines, pattern color=blue] (0,2) rectangle (1,3);
	\draw[white, pattern=north west lines, pattern color=blue] (0,0) rectangle (1,1);
	\draw[white, pattern=north west lines, pattern color=blue] (1,3) rectangle (2,4);
	\draw[white, pattern=north west lines, pattern color=blue] (3,3) rectangle (4,4);
	\draw[black] (1,0) rectangle (4,3);
    \draw[red,thick,dashed] (0,0) rectangle (4,1);
	\draw[red,thick,dashed] (0,2)rectangle (4,3);
	\draw[red,thick,dashed] (1,0) rectangle (2,4);
	\draw[red,thick,dashed] (3,0) rectangle (4,4);
	\fill[red!40!white] (1,0) rectangle (2,1);
	\fill[red!40!white] (1,2) rectangle (2,3);
	\fill[red!40!white] (3,0) rectangle (4,1);
	\fill[red!40!white] (3,2) rectangle (4,3);

	\draw[step=1cm,gray!30!white,very thin] (0,0) grid (4,4);
}}
\caption{Matrix $\widetilde{A}_{1F}$}
\label{fig:fill0}
\end{subfigure}%
\begin{subfigure}[t]{.3\textwidth}
\centering
\resizebox{.7\textwidth}{!}{
  \tikz{
	\fill[blue!50!white] (0,4) rectangle (1,5);
	\node [right] at (1.28,4.5) {\LARGE Close blocks};
    \fill[red!40!white]  (0,2) rectangle (1,3);
	\node [right] at (1.28,2.5) {\LARGE {Blocks of $\widetilde{A}_{1F}$}};
    \fill[pattern=north west lines, pattern color=blue] (0,6) rectangle (1,7);
	\node [right] at (1.28,6.5) {\LARGE Eliminated blocks};
    \fill[white] (0,1) rectangle (1,2);
}}
\end{subfigure}%
\centering
\caption{Illustration of the fill-in caused by the first block row elimination}
\label{fig:fill1}
\end{figure}$\\$

Note that the number of non-zero blocks in the new matrix $\widetilde{A}_{1F}$ is bounded by $(\#\mathcal{I}_1)^2$.
The positions of those blocks are known in advance.
If the elimination is continued, the number of far blocks may grow. To avoid this {problem we use an additional \emph{compression procedure.}}

\subsection{Compression and elimination of $i$-th block row}
\label{sec:ce}

Assume that we have already eliminated $(i-1)$ block rows and now we are working with the $i$-th block row, see Figure~\ref{fig:elim0}.
\begin{figure}[H]
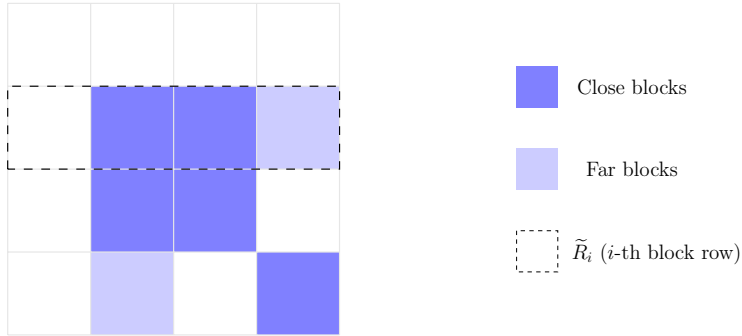

\centering
\begin{subfigure}{.7\textwidth}
\centering
\resizebox{.5\textwidth}{!}{
\tikz{
	\foreach \i in {1,...,3}{
		\fill[blue!50!white] (\i,3-\i) rectangle (\i+1,4-\i);
	}
	\fill[blue!50!white] (1,1) rectangle (2,2);
	\fill[blue!50!white] (2,2) rectangle (3,3);
	\fill[blue!20!white] (1,0) rectangle (2,1) ;
	\fill[blue!20!white] (3,2) rectangle (4,3) ;
	\draw[step=1cm,gray!20!white] (0,0) grid (4,4);
	\draw[black,dashed] (0,2) rectangle (4,3) ;
}}
\end{subfigure}%
\begin{subfigure}{.3\textwidth}
\resizebox{.8\textwidth}{!}{
  \tikz{
	\fill[blue!50!white] (0,0) rectangle (1,1);
	\node at (2.8,0.5) {\Large Close blocks };
    \fill[blue!20!white] (0,-1) rectangle (1,-2);
	\node at (2.8,-1.5) {\Large Far blocks };
    \node at (3.4,-3.5) {\Large $\widetilde{R}_i$ ({$i$-th block row)}};
    \draw[black,dashed] (0,-3) rectangle (1,-4) ;
}}
  \label{fig:1}
\end{subfigure}%
\caption{ Matrix $\widetilde{A}_{i-1}$, starting point for general elimination ($i = 3$)}
\label{fig:elim0}
\end{figure}$\\$
Here we consider the $i$-th block row of the matrix $\widetilde{A}_{i-1}$ and try to eliminate it. After all previous steps we obtain some
{ far blocks},
see Figure~\ref{fig:elim0}.
We can reorder columns in the block row,
$$\widetilde{R}_i = \begin{bmatrix} D_i&C_i&F_i&0\end{bmatrix}P^{\text{col}}_i,$$
where $P^{\text{col}}_i$ is the permutation matrix,
$C_i = [C_i^{(1)} \dots ~ C_i^{(N^{C}_i)}]$ are close blocks in the block row $\widetilde{R}_i$, $N^{C}_i$ is the number of close blocks in  $\widetilde{R}_i$,
$F_i = [F_i^{(1)} \dots ~ F_i^{(N^{F}_i)}]$ are far blocks in the block row $\widetilde{R}_i$, $N^{F}_i $ is the number of far blocks in  $\widetilde{R}_i$,
see Figure~\ref{fig:row3}.

The key idea is to approximate the matrix $F_i \in \mathbb{R}^{B \times (BN^{F}_i)}$ by a low-rank matrix:
\begin{equation}\widetilde{U}_i^{\top}F_i = \begin{bmatrix}\hat{F}_i\\ E_i\end{bmatrix},\end{equation}\label{eq:rreav} where $\widetilde{U}_i$ is a unitary matrix, $\|E_i\| < \varepsilon $ for some $ \varepsilon$ and
$\hat{F}_i \in \mathbb{R}^{r \times BN^{F}_i}$.

{
\begin{remark}
Low-rank approximation presented in equation~\eqref{eq:rreav} is a crucial part of the proposed algorithm.
We consider two strategies of finding the rank~$r$:
\begin{itemize}
\item CE($\varepsilon$): for a given accuracy $\varepsilon$ find the minimum rank $r$ such that  $\|E_i\| < \varepsilon$ (adaptive low-rank approximation).

\item CE($r$): Fix the rank~$r$, typically $r = \frac{B}{2}$ (fixed rank approximation).
\end{itemize}
CE($\varepsilon$) algorithm leads to a better factorization accuracy and thus can be used as an approximate direct solver. A major drawback of the CE($\varepsilon$) algorithm is that the compression of the far blocks is not guaranteed, which may lead to a memory-intensive factorization.

While CE($r$) algorithm directly controls the memory usage, it lacks control over the factorization accuracy. Therefore, CE($r$) is unsuitable for direct approximate solution of the system, but can be a good preconditioner.
\end{remark}}
$\\$
Let \begin{equation} \widetilde{Q}_i =
\begin{bmatrix}
I_{(i-1)B} &0 & 0 \\
0&\widetilde{U}_i &0  \\
0&0 & I_{(M-i)B}
\end{bmatrix},
\label{eq:q}
\end{equation}
then the matrix
$$ {\color{royalblue}\widehat{A}}_{i-1} \approx \widetilde{Q}^{\top}_i \widetilde{A}_{i-1} \widetilde{Q}_i$$ has the $i$-th row
$$ \widetilde{R}_{i*} = \begin{bmatrix}\widetilde{U}_i^{\top}D_i\widetilde{U}_i&\widetilde{U}_i^{\top}C_i& \begin{bmatrix}\hat{F}_i\\0\end{bmatrix}&0\end{bmatrix}P^{\text{col}}_i,$$ see illustration in Figure~\ref{fig:row3}.

The crucial part of the algorithm is that we eliminate only a part of the $i$-th block row of the modified matrix ${\color{royalblue}\widehat{A}}_{i-1}$, which has zero elements in the far zone. Introduce the block subrow $\widetilde{S}_i \in \mathbb{R}^{(B - r)\times N}$ (red-dashed in Figure~\ref{fig:row3}):
$$\widetilde{S}_i = \begin{bmatrix}0_{(B-r) \times r} & I_{(B-r)}\end{bmatrix}\widetilde{R}_{i*}.$$
Now we can eliminate the block subrow $ $ from the matrix ${\color{royalblue}\widehat{A}}_{i-1}$ using the block Cholesky elimination.
\begin{figure}[H]
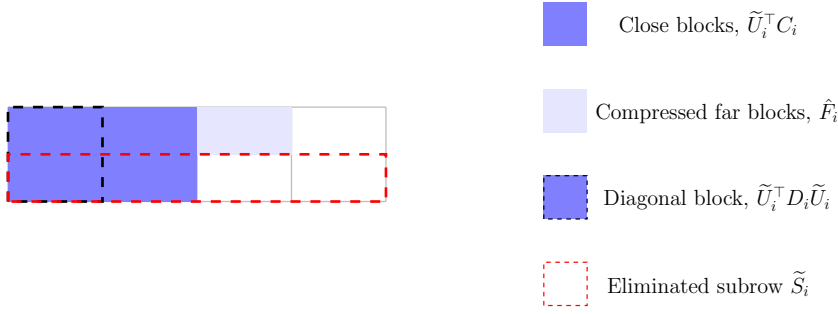

\centering
\begin{subfigure}{.5\textwidth}
  \centering
  \resizebox{.8\textwidth}{!}{
\tikz{
	\draw[step=1cm,gray!50!white] (0,0) grid (4,1);
	\fill[blue!10!white] (2,0.5) rectangle (3,1);
	\fill[blue!50!white] (0,0) rectangle (2,1);
    \draw[black,thick,dashed] (0,0) rectangle (1,1);
    \draw[red,thick,dashed] (0,0) rectangle (4,0.5);
	}}
  \label{fig:sub1}
\end{subfigure}%
\begin{subfigure}{.5\textwidth}
  \centering
    \hspace{-1cm}
  \resizebox{.8\textwidth}{!}{
  \tikz{
	\draw[step=1cm,white,very thin] (0,0) grid (-2,5);
	\fill[blue!10!white] (0,2) rectangle (1,3);
	\fill[blue!50!white] (0,4) rectangle (1,5);
    \draw[red,thick,dashed] (0,-2) rectangle (1,-1);
    \node at (4,0.5) {\Large Diagonal block, $\widetilde{U}_i^{\top}D_i\widetilde{U}_i$};
    \node at (4,2.5) {\Large Compressed far blocks, $\hat{F}_i$};
    \node at (3.8,4.5) {\Large Close blocks, $\widetilde{U}_i^{\top}C_i$};
    \node at (3.8,-1.5) {\Large Eliminated subrow $\widetilde{S}_i$};
    \fill[blue!50!white] (0,0) rectangle (1,1);
    \draw[black,thick,dashed](0,0) rectangle (1,1);
	}}
  \label{fig:1}
\end{subfigure}%
\caption{Compression step}
\label{fig:row3}
\end{figure}$\\$
Denote the diagonal block of the block subrow $\widetilde{S}_i$ as $\widetilde{S}_{ii}$. If the Cholesky decomposition of the diagonal block is $\widetilde{S}_{ii} = \widehat{L}_i\widehat{L}_i^{\top}$, then
$$ {\color{royalblue}\widehat{A}}_{i-1} = \widetilde{L}_i\widetilde{L}_i^{\top} + \widetilde{A}_i, $$ where the matrix $\widetilde{A}_i \in \mathbb{R}^{N\times N}$ has zeros instead of subrow $\widetilde{S}_i$ and subcolumn $\widetilde{S}_i^{\top}$, and
\begin{equation}
\widetilde{L}_i = \widetilde{S}_i^{\top}\widehat{L}_i^{-\top}.
\label{eq:sm_L}
\end{equation}
Note that in this scheme the block sparsity pattern of the block subrow~$\widetilde{S}_i$ coincides with the block sparsity pattern of the original matrix $A$, thus after one elimination in the matrix~$\widetilde{A}_i$ only blocks from the block sparsity pattern of the matrix {\color{royalblue}$\widetilde{A}^2_i$}  can arise. Also note that the compression step does not affect locations and sizes of the far blocks, see Figure~\ref{fig:fill2}. {There is an important interpretation of the far blocks: far blocks sparsity pattern consists of blocks that are nonzero in the matrix $A^2$ and zero in the matrix $A$.}
\begin{figure}[H]
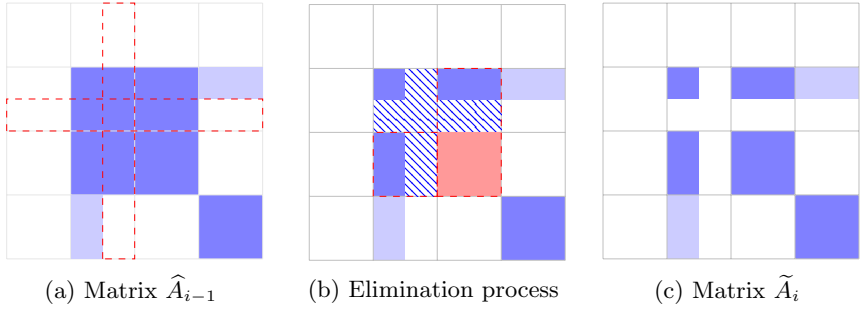

\begin{subfigure}{.3\textwidth}
\centering
  \resizebox{.9\textwidth}{!}{
\tikz{
	\foreach \i in {1,...,3}{
		\fill[blue!50!white] (\i,3-\i) rectangle (\i+1,4-\i);
	}
	\fill[blue!50!white] (1,1) rectangle (2,2);
	\fill[blue!50!white] (2,2) rectangle (3,3);
	\draw[step=1cm,gray!20!white] (0,0) grid (4,4);
	\fill[blue!20!white] (1,0) rectangle (1.5,1) ;
    \fill[blue!20!white] (3,2.5) rectangle (4,3) ;
    \draw[red,dashed] (0,2) rectangle (4,2.5);
    \draw[red,dashed] (1.5,0) rectangle (2,4);
}}
\caption{Matrix ${\color{royalblue}\widehat{A}}_{i-1}$}
\label{fig:elim2}
\end{subfigure}
\begin{subfigure}{.3\textwidth}
\centering
\resizebox{.9\textwidth}{!}{
\tikz{

	\foreach \i in {1,...,3}{
		\fill[blue!50!white] (\i,3-\i) rectangle (\i+1,4-\i);
	}
	\fill[blue!50!white] (1,1) rectangle (2,2);
	\fill[blue!50!white] (2,2) rectangle (3,3);
	\fill[blue!20!white] (1,0) rectangle (1.5,1) ;
	\fill[blue!20!white] (3,2.5) rectangle (4,3) ;
    \fill[white]  (0,2) rectangle (4,2.5);
    \fill[white]  (1.5,0) rectangle (2,4);
	\fill[pattern=north west lines, pattern color=blue] (1,2) rectangle (2,3);
	\fill[blue!50!white] (1,2.5) rectangle (1.5,3) ;
    \draw[step=1cm,gray!50!white] (0,0) grid (4,4);
    \fill[pattern=north west lines, pattern color=blue] (2,2) rectangle (3,2.5);
    \fill[pattern=north west lines, pattern color=blue] (1.5,1) rectangle (2,2);
    \fill[red!40!white]  (2,1) rectangle (3,2);
	\draw[red,dashed] (1,1) rectangle (2,2);
    \draw[red,dashed] (2,1) rectangle (3,3);
}}
\caption{{Elimination process}}
\label{fig:fill2}
\end{subfigure}%
\begin{subfigure}{.3\textwidth}
\centering
  \resizebox{.9\textwidth}{!}{
\tikz{
	\foreach \i in {1,...,3}{
		\fill[blue!50!white] (\i,3-\i) rectangle (\i+1,4-\i);
	}
	\fill[blue!50!white] (1,1) rectangle (2,2);
	\fill[blue!50!white] (2,2) rectangle (3,3);
	\fill[blue!20!white] (1,0) rectangle (1.5,1) ;
	\fill[blue!20!white] (3,2.5) rectangle (4,3) ;
	\fill[white]  (0,2) rectangle (4,2.5);
	\fill[white]  (1.5,0) rectangle (2,4);
	\fill[blue!50!white] (1,2.5) rectangle (1.5,3) ;
	\draw[step=1cm,gray!50!white] (0,0) grid (4,4);
}}
\caption{Matrix $\widetilde{A}_i$}
\label{fig:elim2}
\end{subfigure}%
\caption{Elimination step. Gray color denotes nonzero rows and columns of matrix $\widetilde{A}_{i}.$}
\end{figure}
$\\$
After the compression-elimination step, the matrix $\widetilde{A}_{i-1}$ is approximated as
$$
  \widetilde{Q}_i\widetilde{A}_{i-1}\widetilde{Q}_i^{\top} \approx  \widetilde{L}_i\widetilde{L}_i^{\top} + \widetilde{A}_{i},$$
where $\widetilde{L}_{i} \in \mathbb{R}^{N \times (B-r)}$ and the matrix $\widetilde{A}_{i} \in \mathbb{R}^{N \times N}$ have the same size {as} the initial matrix $A$.

\subsection{Full sweep of the algorithm}
\label{sec:full_one}

{Consider the compression-elimination procedure} after all block rows have been processed. We store factors~$\widetilde{L}_i$ multiplied by corresponding matrices~$\widetilde{Q}_i$ as columns of the matrix~${\color{royalblue}\check{L}}_1$:
\begin{equation}{\color{royalblue}\check{L}}_1 = \begin{bmatrix}\left( \prod_{j = M}^{2} \widetilde{Q}_j\right)\widetilde{L}_1& \cdots&\left(\prod_{j = M}^{i+1}\widetilde{Q}_j\right)\widetilde{L}_i& \cdots & \widetilde{L}_M \end{bmatrix}.
\label{eq:big_L}
\end{equation}
We obtain the following approximation:
$$\widetilde{Q}_M \dots  \widetilde{Q}_1 A \widetilde{Q}_1^{\top}\dots \widetilde{Q}_M^{\top} \approx {\color{royalblue}\check{L}}_1{\color{royalblue}\check{L}}^{\top}_1 + {\color{royalblue}\check{A}}_1,$$
where the matrix ${\color{royalblue}\check{L}}_1 \in \mathbb{R}^{N\times N_{L1}},$ see Figure~\ref{fig:L_1}, the matrix ${\color{royalblue}\check{A}}_1  \in \mathbb{R}^{N\times N}$, see Figure~\ref{fig:AM}.
Since multiplication by the matrix $\widetilde{Q}_i$ does not change the sparsity patterns of the matrices~${\color{royalblue}\check{L}}_1$ and~${\color{royalblue}\check{A}}_1$ during the elimination, the block sparsity pattern of~${\color{royalblue}\check{L}}_1$ can be easily obtained from the block sparsity pattern of the original matrix.
Denote
$$Q_1 = \prod_{i=M}^{1}\widetilde{Q}_i.$$
Due to a special structure of $\widetilde{Q}_i$ \eqref{eq:q},
$$Q_1 = \text{diag}(\widetilde{U}_1, \dots ,\widetilde{U}_M),$$ i.e. $Q_1$ is a block-diagonal matrix.
\begin{figure}[H]
\centering
\begin{subfigure}[t]{.3\textwidth}
\centering
\resizebox{.8\textwidth}{!}{
\tikz{
	\draw[step=1cm,white,very thin] (-0.5,0) grid (3.5,4);
	\draw[step=0.5cm,gray!50!white] (0,0) grid (2.5,4);
	\fill[green!10!white] (0,0) rectangle (1,4);
	\foreach \i in {1,...,3}{
		\fill[green!10!white] (\i*0.5+0.5,0) rectangle (\i*0.5+1,4-\i);
	}
	\draw[green!50!black,fill=green!50!white] (0,3) rectangle (1,4);
	\foreach \i in {1,...,3}{
		\draw[green!50!black,fill=green!50!white] (\i*0.5+0.5,3-\i) rectangle (\i*0.5+1,4-\i);
	}
	\draw[green!50!black,fill=green!50!white] (1.5,2.5) rectangle (2,3);
	\draw[green!50!black,fill=green!50!white] (0,0) rectangle (1,1);
	\draw[green!50!black,fill=green!50!white] (0,2) rectangle (1,3);
	\draw[green!50!black,fill=green!50!white] (1,1) rectangle (1.5,2);
}}
\caption{Matrix ${\color{royalblue}\check{L}}_1$}
\label{fig:L_1}
\end{subfigure}%
\begin{subfigure}[t]{.3\textwidth}
\centering
    \hspace{-0.5cm}
  \resizebox{.8\textwidth}{!}{
  \tikz{
	\draw[step=1cm,gray!50!white] (0,0) grid (4,4);
	\foreach \i in {0,...,3}{
		\fill[blue!40!red!30!white] (\i,3-\i) rectangle (\i+1,4-\i);
	}
}}
\caption{Matrix $Q_1$}
\end{subfigure}%
\begin{subfigure}[t]{.3\textwidth}
\centering
    \hspace{-1cm}
  \resizebox{.8\textwidth}{!}{
  \tikz{
    \draw[step=1cm,gray!50!white] (0,0) grid (4,4);
	\foreach \i in {1,...,3}{
		\fill[blue!50!white] (\i,3.5-\i) rectangle (\i+0.5,4-\i);
	}
	\fill[blue!50!white] (1,1.5) rectangle (1.5,2);
	\fill[blue!50!white] (2,2.5) rectangle (2.5,3);
	\fill[blue!20!white] (1,0.5) rectangle (1.5,1);
	\fill[blue!20!white] (3,2.5) rectangle (3.5,3);
}}
\caption{Matrix ${\color{royalblue}\check{A}}_1$}
\label{fig:AM}
\end{subfigure}
\caption{Illustration of matrices ${\color{royalblue}\check{L}}_1$, $Q_1$ and ${\color{royalblue}\check{A}}_1$}
\label{fig:LQ}
\end{figure}$\\$

Let us now permute the block rows of the matrix {\color{royalblue}$A_0$} in such a way that the eliminated rows are before the non-eliminated ones:

\begin{equation}\label{eq:rem1} P_1 Q_1 {\color{royalblue}A_0} Q_1^{\top}P_1^{\top} \approx P_1({\color{royalblue}\check{L}}_1{\color{royalblue}\check{L}}_1^{\top} + {\color{royalblue}\check{A}}_1)P_1^{\top} = L_1 L^{\top}_1 + \begin{bmatrix}
0_{n_{L1} \times n_{L1}} & 0 \\
0&A_1
\end{bmatrix},\end{equation}
where
$$L_1 =  P_1{\color{royalblue}\check{L}}_1.$$
The matrix $L_1\in \mathbb{R}^{N\times N_{L1}}$ is  a sparse block lower triangular matrix with  the block size $(B-r) \times (B-r)$, $A_1  \in \mathbb{R}^{N_{A_1}\times N_{A_1}}$ is a block-sparse matrix $A_1 = P_1{\color{royalblue}\check{A}}_1P_1^{\top}$ with  the block size $r\times r$  and $N = N_{L_1}+N_{A_1},$ see Figure~\ref{fig:AM1}.
\begin{figure}[H]
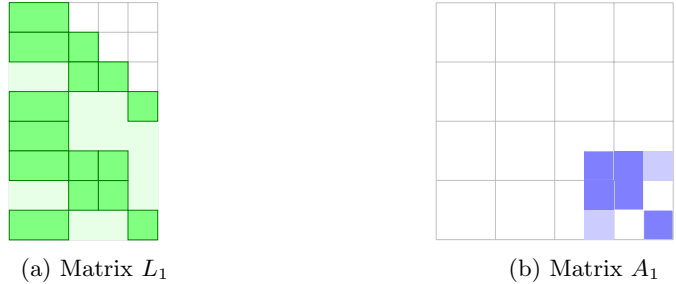

\centering
\begin{subfigure}{.5\textwidth}
  \centering
  \resizebox{.5\textwidth}{!}{
  \tikz{
  \draw[step=1cm,white,very thin] (-0.5,0) grid (3.5,4);
  \draw[step=0.5cm,gray!50!white] (0,0) grid (2.5,4);
  \fill[green!10!white] (0,0) rectangle (1,4);
  \foreach \i in {1,...,3}{
	  \fill[green!10!white] (\i*0.5+0.5,0) rectangle (\i*0.5+1,4-\i*0.5);
  }
  \draw[green!50!black,fill=green!50!white] (0,3.5) rectangle (1,4);
  \draw[green!50!black,fill=green!50!white] (0,1.5) rectangle (1,2);
  \foreach \i in {1,...,3}{
	  \draw[green!50!black,fill=green!50!white] (\i*0.5+0.5,3.5-\i*0.5) rectangle (\i*0.5+1,4-\i*0.5);
	  \draw[green!50!black,fill=green!50!white] (\i*0.5+0.5,1.5-\i*0.5) rectangle (\i*0.5+1,2-\i*0.5);
  }
  \draw[green!50!black,fill=green!50!white] (0,0) rectangle (1,0.5);
  \draw[green!50!black,fill=green!50!white] (0,1) rectangle (1,1.5);
  \draw[green!50!black,fill=green!50!white] (0,2) rectangle (1,2.5);
  \draw[green!50!black,fill=green!50!white] (0,3) rectangle (1,3.5);

  \draw[green!50!black,fill=green!50!white] (1,0.5) rectangle (1.5,1);
  \draw[green!50!black,fill=green!50!white] (1,2.5) rectangle (1.5,3);
  \draw[green!50!black,fill=green!50!white] (1.5,1) rectangle (2,1.5);
}}
  \caption{Matrix $L_1$}
  \label{fig:sub1}
\end{subfigure}%
\begin{subfigure}{.5\textwidth}
  \centering
    \hspace{-1cm}
  \resizebox{.5\textwidth}{!}{
  \tikz{
  \foreach \i in {1,...,3}{
	\fill[blue!50!white] (\i*0.5+2,1.5-\i*0.5) rectangle (\i*0.5+2.5,2-\i*0.5);
	}
	\draw[step=0.5cm,white,very thin] (2.5,0) grid (4,1.5);
    \draw[step=1cm,gray!50!white] (0,0) grid (4,4);
	\fill[blue!50!white] (2.5,0.5) rectangle (3,1);
	\fill[blue!50!white] (3,1) rectangle (3.5,1.5);
	\fill[blue!20!white] (2.5,0) rectangle (3,0.5);
	\fill[blue!20!white] (3.5,1) rectangle (4,1.5);
}}
  \caption{Matrix $A_1$}
  \label{fig:AM1}
\end{subfigure}%
\caption{Matrices $L_1$ and $A_1$ after the full sweep of compression-elimination procedure}
\label{fig:AM1}
\end{figure}$\\$
We refer to the full sweep of the compression-elimination procedure described above as ``one level'' of elimination. Then we consider the matrix $A_1$ as a new block sparse matrix and again apply  the compression-elimination procedure to it (``second level'' of elimination).

{\color{royalblue}
\subsection{Expanded residual graphs}
\label{sec:erg}
In this section we illustrate one level of the compression-elimination procedure using the expanded residual graph toolbox.
The example matrix $A_0$ has an associated graph illustrated in Figure~\ref{fig:erg}.
\begin{figure}[H]
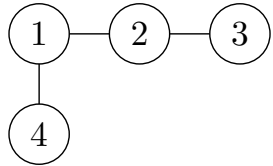

\centering
\resizebox{.28\textwidth}{!}{
\tikz{
\foreach \i in {0,...,2}{
	\draw (\i,0) circle (0.3);
}
\draw (0,-1) circle (0.3);
\node [] at (0,0) {1};
\node [] at (1,0) {2};
\node [] at (2,0) {3};
\node [] at (0,-1) {4};
\draw [] (0.3,0) -- (0.7,0);
\draw [] (1.3,0) -- (1.7,0);
\draw [] (0,-0.7) -- (0,-0.3);
}}
\caption{Connectivity graph of the matrix~$A_0$}
\label{fig:erg}
\end{figure}
In the usual sparse Gaussian elimination framework, we would reason about
sparsity at a graph level by maintaining residual graphs associated with the
Schur complements. At step $k$, the structure of column $k$ is given by connections
in the $k$-th graph; to get the next graph, we connect all neighbors of $k$ in the $k$-th
graph into a clique. The compress-eliminate procedure can be seen as adding
an intermediate operation; we first split the $k$-th node into ``basis'' and ``non-basis''
degrees of freedom, and then only eliminate the ``non-basis'' piece before we move
on to the next node. As a result, at the end of one pass, we have a piece of $L$
with block structure inherited from that of $A$, since the ``non-basis'' nodes only have
the connectivity of the original matrix; and a Schur complement consisting of
the ``basis'' nodes from the pass, which has structure associated with $A^2$. We
illustrate the compress-eliminate process in terms of the graph in Figure~\ref{fig:one_lvl_ce_gr} and
show the resulting structure in Figure~\ref{fig:fig:one_lvl_ce_mat}.

\begin{figure}[H]
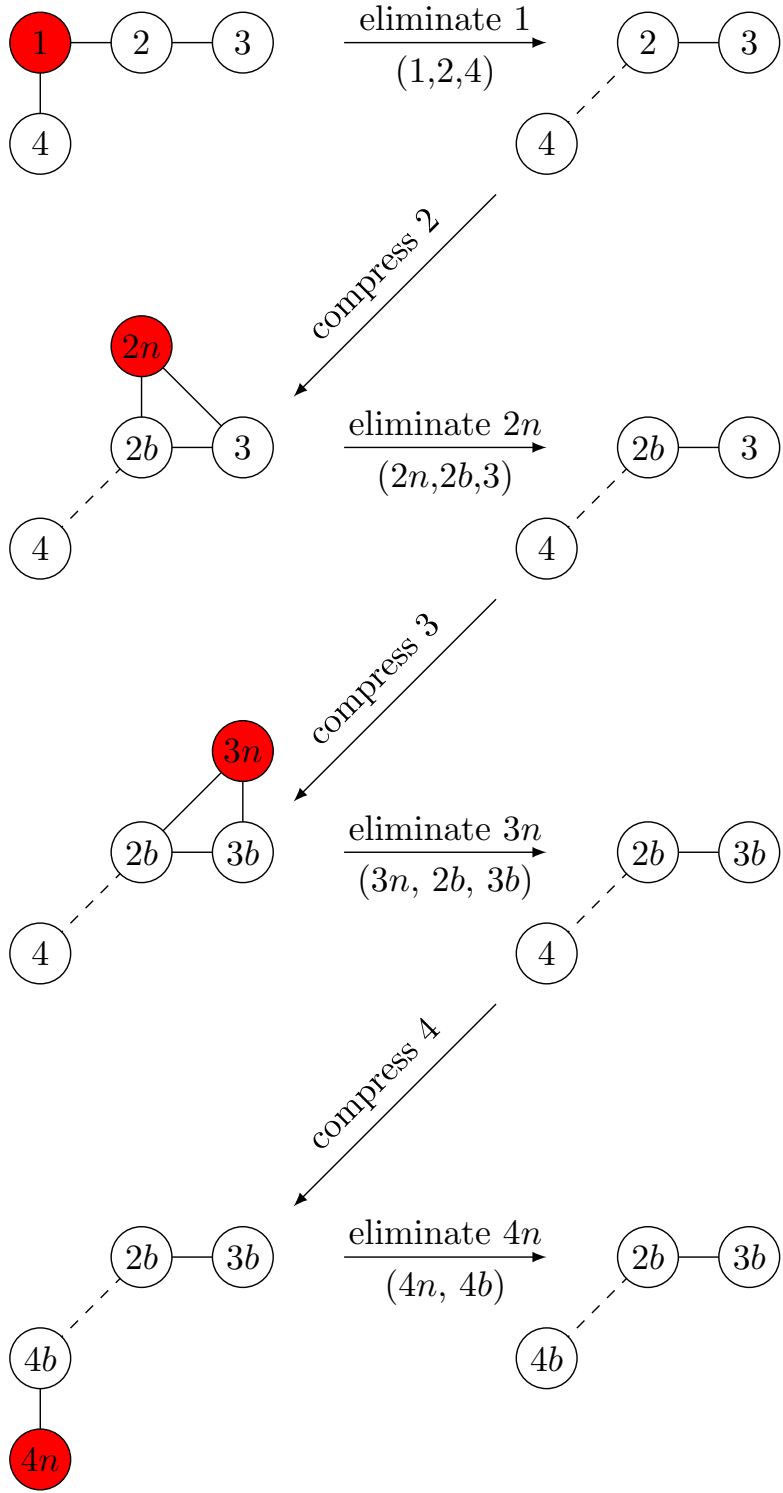

\centering
\resizebox{.8\textwidth}{!}{
\tikz{
\draw [] (0,0) -- (1,0);
\draw [] (1,0) -- (2,0);
\draw [] (0,-1) -- (0,0);
\draw [fill = red](0,0) circle (0.3);
\draw [fill = white](1,0) circle (0.3);
\draw [fill = white](2,0) circle (0.3);
\draw [fill = white](0,-1) circle (0.3);
\node [] at (0,0) {1};
\node [] at (1,0) {2};
\node [] at (2,0) {3};
\node [] at (0,-1) {4};

\begin{scope}[shift={(0.5,0.5)}]
\draw[-latex] (2.5,-0.5) -- (4.5,-0.5);
\node [above] at (3.5,-0.5) {eliminate 1};
\node [below] at (3.5,-0.5) {(1,2,4)};
\end{scope}

\begin{scope}[shift={(5,0)}]
\draw [] (1,0) -- (2,0);
\draw [dashed] (0,-1) -- (1,0);
\draw [fill = white] (1,0) circle (0.3);
\draw [fill = white] (2,0) circle (0.3);
\draw [fill = white] (0,-1) circle (0.3);
\node [] at (1,0) {2};
\node [] at (2,0) {3};
\node [] at (0,-1) {4};
\end{scope}

\begin{scope}[shift={(0,0)}]
\draw[-latex] (4.5,-1.5) -- (2.5,-3.5);
\node [above, rotate=45] at (3.5,-2.5) {compress 2};
\end{scope}

\begin{scope}[shift={(0,-4)}]
\draw [] (1,1) -- (2,0);
\draw [dashed] (0,-1) -- (1,0);
\draw [] (1,1) -- (1,0);
\draw [] (1,0) -- (2,0);
\draw [fill = red](1,1) circle (0.3);
\draw [fill = white] (1,0) circle (0.3);
\draw [fill = white] (2,0) circle (0.3);
\draw [fill = white] (0,-1) circle (0.3);
\node [] at (1,1) {$2n$};
\node [] at (1,0) {$2b$};
\node [] at (2,0) {3};
\node [] at (0,-1) {4};
\end{scope}

\begin{scope}[shift={(0.5,-3.5)}]
\draw[-latex] (2.5,-0.5) -- (4.5,-0.5);
\node [above] at (3.5,-0.5) {eliminate $2n$};
\node [below] at (3.5,-0.5) {($2n$,$2b$,3)};
\end{scope}

\begin{scope}[shift={(5,-4)}]
\draw [dashed] (0,-1) -- (1,0);
\draw [] (1,0) -- (2,0);
\draw [fill = white] (1,0) circle (0.3);
\draw [fill = white] (2,0) circle (0.3);
\draw [fill = white] (0,-1) circle (0.3);
\node [] at (1,0) {$2b$};
\node [] at (2,0) {3};
\node [] at (0,-1) {4};
\end{scope}

\begin{scope}[shift={(0,-4)}]
\draw[-latex] (4.5,-1.5) -- (2.5,-3.5);
\node [above, rotate=45] at (3.5,-2.5) {compress 3};
\end{scope}

\begin{scope}[shift={(0,-8)}]
\draw [] (2,1) -- (1,0);
\draw [dashed] (0,-1) -- (1,0);
\draw [] (2,1) -- (2,0);
\draw [] (1,0) -- (2,0);
\draw [fill = red](2,1) circle (0.3);
\draw [fill = white] (1,0) circle (0.3);
\draw [fill = white] (2,0) circle (0.3);
\draw [fill = white] (0,-1) circle (0.3);
\node [] at (2,1) {$3n$};
\node [] at (1,0) {$2b$};
\node [] at (2,0) {$3b$};
\node [] at (0,-1) {4};
\end{scope}

\begin{scope}[shift={(0.5,-7.5)}]
\draw[-latex] (2.5,-0.5) -- (4.5,-0.5);
\node [above] at (3.5,-0.5) {eliminate $3n$};
\node [below] at (3.5,-0.5) {($3n$, $2b$, $3b$)};
\end{scope}

\begin{scope}[shift={(5,-8)}]
\draw [dashed] (0,-1) -- (1,0);
\draw [] (1,0) -- (2,0);
\draw [fill = white] (1,0) circle (0.3);
\draw [fill = white] (2,0) circle (0.3);
\draw [fill = white] (0,-1) circle (0.3);
\node [] at (1,0) {$2b$};
\node [] at (2,0) {$3b$};
\node [] at (0,-1) {4};
\end{scope}

\begin{scope}[shift={(0,-8)}]
\draw[-latex] (4.5,-1.5) -- (2.5,-3.5);
\node [above, rotate=45] at (3.5,-2.5) {compress 4};
\end{scope}

\begin{scope}[shift={(0,-12)}]
\draw [dashed] (0,-1) -- (1,0);
\draw [] (1,0) -- (2,0);
\draw [] (0,-1) -- (0,-2);
\draw [fill = red](0,-2) circle (0.3);
\draw [fill = white] (1,0) circle (0.3);
\draw [fill = white] (2,0) circle (0.3);
\draw [fill = white] (0,-1) circle (0.3);
\node [] at (1,0) {$2b$};
\node [] at (2,0) {$3b$};
\node [] at (0,-1) {$4b$};
\node [] at (0,-2) {$4n$};
\end{scope}

\begin{scope}[shift={(0.5,-11.5)}]
\draw[-latex] (2.5,-0.5) -- (4.5,-0.5);
\node [above] at (3.5,-0.5) {eliminate $4n$};
\node [below] at (3.5,-0.5) {($4n$, $4b$)};
\end{scope}

\begin{scope}[shift={(5,-12)}]
\draw [dashed] (0,-1) -- (1,0);
\draw [] (1,0) -- (2,0);
\draw [fill = white] (1,0) circle (0.3);
\draw [fill = white] (2,0) circle (0.3);
\draw [fill = white] (0,-1) circle (0.3);
\node [] at (1,0) {$2b$};
\node [] at (2,0) {$3b$};
\node [] at (0,-1) {$4b$};
\end{scope}

}}
\caption{One compress-eliminate pass on an example matrix. The dashed
line $(2, 4)$ indicates an edge that was not present in the original graph. Arrows
representing elimination steps are also labeled with the nodes that are connected
after elimination (and therefore are connected in the computed part of $L$)}
\label{fig:one_lvl_ce_gr}
\end{figure}$\\$
\begin{figure}[H]
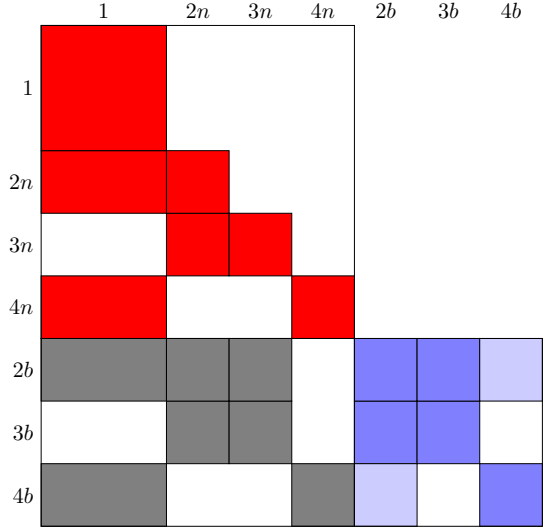

\centering
\resizebox{.56\textwidth}{!}{
\tikz{
\draw[fill = gray]  (0,0) rectangle (2,1);
\draw[fill = gray]  (0,2) rectangle (2,3);
\draw[fill = gray]  (2,1) rectangle (3,2);
\draw[fill = gray]  (2,2) rectangle (3,3);
\draw[fill = gray]  (3,1) rectangle (4,2);
\draw[fill = gray]  (3,2) rectangle (4,3);
\draw[fill = gray]  (4,0) rectangle (5,1);
\draw[] (0,0) rectangle (5,3);

\draw[fill = blue!50!white]  (5,1) rectangle (6,2);
\draw[fill = blue!50!white]  (5,2) rectangle (6,3);
\draw[fill = blue!50!white]  (6,1) rectangle (7,2);
\draw[fill = blue!50!white]  (6,2) rectangle (7,3);
\draw[fill = blue!50!white]  (7,0) rectangle (8,1);
\draw[fill = blue!20!white]  (5,0) rectangle (6,1);
\draw[fill = blue!20!white]  (7,2) rectangle (8,3);
\draw[] (5,0) rectangle (8,3);

\draw[fill = red]  (0,3) rectangle (2,4);
\draw[fill = red]  (0,5) rectangle (2,6);
\draw[fill = red]  (0,6) rectangle (2,8);
\draw[fill = red]  (2,4) rectangle (3,5);
\draw[fill = red]  (2,5) rectangle (3,6);
\draw[fill = red]  (3,4) rectangle (4,5);
\draw[fill = red]  (4,3) rectangle (5,4);
\draw[] (0,3) rectangle (5,8);

\node [left] at (0,0.5) {$4b$};
\node [left] at (0,1.5) {$3b$};
\node [left] at (0,2.5) {$2b$};
\node [left] at (0,3.5) {$4n$};
\node [left] at (0,4.5) {$3n$};
\node [left] at (0,5.5) {$2n$};
\node [left] at (0,7) {$1$};

\node [above] at (7.5,8) {$4b$};
\node [above] at (6.5,8) {$3b$};
\node [above] at (5.5,8) {$2b$};
\node [above] at (4.5,8) {$4n$};
\node [above] at (3.5,8) {$3n$};
\node [above] at (2.5,8) {$2n$};
\node [above] at (1,8) {$1$};

}}
\caption{Structure after one compress-eliminate pass on the example matrix.
The $L_1$ matrix, consisting of a near/near interaction matrix and near/far interaction
matrix (red and gray) have the same structure as $A$; the remaining
far/far interaction Schur complement $A_1$ (blue) contains fill blocks (light blue)
associated with edges in $A^2$ that were not in $A$.}
\label{fig:fig:one_lvl_ce_mat}
\end{figure}$\\$
}

\subsection{Sparsity of the factors}
Let us now study the sparsity of the factors $L_1$ and $A_1$. First define the block sparsity pattern of a block sparse matrix. For matrix~$A$ with $M$ block columns, $M$ block rows and block size $B \times B$ define $\bsp{A}{M}{B \times B}$ (block sparsity pattern of block-sparse matrix $A$) as a function
$$F_A(i,j) \rightarrow \{0,1\}, \forall i,j = 1\dots M$$ that returns $1$ if the block $A_{ij}$ is nonzero and $0$ otherwise. For simplicity, assume that in the first level elimination at each step we eliminate $(B - r)$ rows.
{\color{royalblue} Note that in this section we consider only the case with fixed rank $r$ and fixed block size $B$. Cases of adaptive choice of rank $r$ and non-uniform block size $B$ are the subjects for the future research.}
For the matrix~${\color{royalblue}\check{L}}_1$, according to equations~\eqref{eq:sm_L} and~\eqref{eq:big_L} we can see that
$$\bsp{{\color{royalblue}\check{L}}_1}{M}{B \times (B-r)} {\color{royalblue} \leqslant} \bsp{{\color{royalblue}A_0}}{M}{B \times B}.$$ The permutation matrix $P_1$ does not change the number of nonzero  {blocks in $L_1 = P_1{\color{royalblue}\check{L}}_1$.}
 {Denote the} number of nonzero blocks of matrix $A$  {as} $$\#\bsp{A}{M}{B \times B}.$$  Since  the permutation $P_1$ splits  the blocks of the matrix $L_1$ into ``eliminated'' and ``not eliminated'' parts,
\begin{equation}\#\bsp{L_1}{M}{(B-r)\times (B-r) \text{ or } r\times (B-r)} {\color{royalblue} \leqslant} 2\#\bsp{{\color{royalblue}\check{L}}_1}{M}{B \times (B-r)}{\color{royalblue} \leqslant}2 \#\bsp{{\color{royalblue}A_0}}{M}{B\times B}.
\label{eq:L}
\end{equation}
Unlike the matrix $L_1$, the sparsity of the matrix $A_1$ grows in a more complex way. Since the matrix~$A_1$ is an initial point for the next level it is very important to estimate its sparsity.
Thanks to \eqref{eq:A_st} we can see that\footnote{By $\bsp{A_1}{M}{r \times r} \leqslant \bsp{A^2}{M}{B \times B}$ we mean that {$F_{A_1}(i,j) \leqslant F_{{\color{royalblue}A_0}^2}(i,j), \forall i,j = 1\dots M.$}}
$$\bsp{A_1}{M}{r \times r} \leqslant \bsp{{\color{royalblue}A_0}^2}{M}{B \times B}.$$

The crucial part of the algorithm is that for the next level of elimination we  {combine}
the blocks of the matrix~$A_1$ into super-blocks of size $rJ \times rJ$ ($Jr \approx B$). Then we consider the new matrix with $M/J$ blocks (assume that $M$ is divisible by $J$). The number $J$ is the inner parameter of the algorithm. The new matrix is then considered as  an $MJ \times MJ$ matrix. (The new block size is $Jr$.\footnote{We can easily adapt for variable block size, but for simplicity we consider them to be equal.})
We can see that

\begin{equation}
\bsp{A_1}{(M/J)}{rJ\times {\color{royalblue}rJ}} \leqslant \bsp{{\color{royalblue}A_0}^2}{(M/J)}{BJ\times BJ}.
\label{eq:bsp}
\end{equation}

This is where the choice of the original ordering is required. {We consider the choice of permutation in Section~\ref{sec:complan}.}

\subsection{Multilevel step}
Consider the matrix $A_1$ with $Jr \times Jr$ blocks and use the compression-elimination procedure to find corresponding matrices $Q_2$, $L_2$ and $P_2$:
$$P_2Q_2A_1Q_2^{\top}P^{\top}_2 \approx L_2 L^{\top}_2 + \begin{bmatrix}
0_{N_{L2}\times N_{L2}}& 0 \\
0&A_2
\end{bmatrix}.$$
We introduce
$$ \breve{Q}_2 = \begin{bmatrix}
I_{N_{L1} }& 0 \\
0&Q_2
\end{bmatrix},\quad \breve{P}_2 = \begin{bmatrix}
I_{N_{L1}}& 0 \\
0&P_2
\end{bmatrix},  \quad\text{and } \quad{\color{royalblue}\breve{L}}_2 = \begin{bmatrix}
0_{N_{L1} \times N_{L2}}\\
L_2
\end{bmatrix}.$$
then we have two-level approximate factorization
\begin{equation}\breve{P}_2\breve{Q}_2 \breve{P}_1 \breve{Q}_1 \, {\color{royalblue}A_0}\, \breve{Q}_1^{\top}\breve{P}_1^{\top} \breve{Q}_2^{\top}\breve{P}_2^{\top} \approx \begin{bmatrix} L_1& {\color{royalblue}\breve{L}}_2\end{bmatrix}\begin{bmatrix} L_1& {\color{royalblue}\breve{L}}_2\end{bmatrix}^{\top} +\begin{bmatrix}
 0&0 \\
 0 & A_2
\end{bmatrix},
\end{equation}\label{eq:scnd_lvl}
When the size is small we do the Cholesky factorization {of the remainder}.
The algorithm is summarized in the next proposition.
\begin{proposition}
\label{prop:CE_L}
The compression-elimination (CE) algorithm leads to the approximate factorization
\begin{equation}\label{eq:ce}A \approx Q^{\top}LL^{\top}Q,\end{equation} where $Q$ is a unitary matrix equal to the multiplication of block-diagonal and permutation matrices
{\color{royalblue}$$Q = \left( \prod_{j=1}^{K}\breve{P}_j \breve{Q}_j \right) P,$$}
L is a sparse block {lower-triangular} matrix
$$L = \begin{bmatrix}
\begin{bmatrix}
\\ L_1\\ \\ \\
\end{bmatrix}
&\begin{matrix}
0\\
\begin{bmatrix}
 \\L_2 \\ \\
\end{bmatrix}
\end{matrix}
&\dots &
\begin{matrix}
0\\\vdots \\0\\
\begin{bmatrix}
L_{K+1}
\end{bmatrix}
\end{matrix}
\end{bmatrix} = \begin{bmatrix} L_1 & \breve{L}_2 & \dots & \breve{L}_{K+1} \end{bmatrix},$$
where block $L_i$ has
$$\#L_j = 2 \#\bsp{A_{j-1}}{M/J^{j-1}}{rJ \times rJ}$$ nonzero blocks, and the total number of nonzero blocks in factor $L$ is
\begin{equation}
\label{eq:l_nnz}
\begin{aligned}
\#L {\color{royalblue} \leqslant} 2\#\bsp{{\color{royalblue}A_0}}{M}{B \times B} + 2 \left( \sum_{j=1}^{K-1} \#\bsp{{\color{royalblue}A_0}^{2^{j}}}{M/J^{j}}{BJ^{j}\times BJ^{j}} \right) + \\
+ \frac{1}{2}(M/J^K)^2 r^2.
\end{aligned}
\end{equation}
$K$ is the number of levels in CE algorithm, $B$ is the block size.}
{ Time complexity for the approximate CE factorization is  $\mathcal{O}(B^3(\#L_{K+1}-\#L_1) + B^2\#L)$.
Memory complexity is {\color{royalblue}$\mathcal{O}(B(B-r)\#L + NB)$.}
We denote this approximate factorization \eqref{eq:ce} by \emph{CE factorization} of matrix $A$.
\end{proposition}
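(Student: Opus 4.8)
The plan is to prove the proposition by induction on the number of levels $K$, assembling the single-level and two-level constructions already established in~\eqref{eq:rem1} and~\eqref{eq:scnd_lvl}, and then to count nonzero blocks and operations level by level. First I would establish the factorization form~\eqref{eq:ce}. The base case $K=1$ is exactly~\eqref{eq:rem1}, namely $P_1 Q_1 A Q_1^{\top} P_1^{\top} \approx L_1 L_1^{\top} + \mathrm{diag}(0, A_1)$. For the inductive step I would assume the claim holds for the trailing matrix $A_1$ with $K-1$ levels, apply the embeddings $\breve{Q}_j$, $\breve{P}_j$ and the zero-padded factors $\check{L}_j$ defined before the two-level factorization~\eqref{eq:scnd_lvl}, and observe that conjugating by $\breve{P}_j\breve{Q}_j$ leaves the already-computed block columns $L_1,\dots,L_{j-1}$ untouched, since they lie in the leading identity block of $\breve{P}_j\breve{Q}_j$. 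Collecting the padded columns yields the staircase matrix $L=[\,L_1\ \breve{L}_2\ \dots\ \breve{L}_{K+1}]$, which is block lower-triangular because each $\check{L}_j$ carries zero blocks above its band. That $Q=\prod_{j=1}^{K}\breve{P}_j\breve{Q}_j$ is unitary follows because every $\breve{Q}_j$ is block-diagonal with the unitary blocks $\widetilde{U}_i$ from~\eqref{eq:q} (and identity blocks) on the diagonal, every $\breve{P}_j$ is a permutation, and products of unitary matrices are unitary; the final small level contributes the dense Cholesky factor $L_{K+1}$.

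Next I would count the nonzero blocks. At a single level, equation~\eqref{eq:L} shows that splitting by $P_j$ into eliminated and non-eliminated parts doubles the pattern, giving $\#L_j = 2\,\#\bsp{A_{j-1}}{M/J^{j-1}}{rJ \times rJ}$ with $A_0 = A$, and summing over $j$ gives the total $\#L$. To turn this into~\eqref{eq:l_nnz} I would iterate the squaring bound~\eqref{eq:bsp}: since $\bsp{A_1}{M/J}{rJ\times rJ}\leqslant \bsp{A^2}{M/J}{BJ\times BJ}$ and each level re-applies Schur complementation followed by super-blocking, an induction gives $\bsp{A_j}{M/J^j}{rJ\times rJ}\leqslant \bsp{A^{2^j}}{M/J^j}{BJ^j\times BJ^j}$. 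Substituting term by term, the $j=1$ contribution is exactly $2\,\#\bsp{A}{M}{B\times B}$, the contributions $j=2,\dots,K$ reindex to the sum $2\sum_{j=1}^{K-1}\#\bsp{A^{2^j}}{M/J^j}{BJ^j\times BJ^j}$, and the dense Cholesky of the final $(M/J^K)r\times(M/J^K)r$ remainder contributes $\tfrac{1}{2}(M/J^K)^2 r^2$, reproducing~\eqref{eq:l_nnz}.

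Finally I would read off the complexity estimates from the per-block-row costs. Each block-row elimination performs the low-rank approximation~\eqref{eq:rreav} of the $B\times(Bn^{F}_i)$ far part, the application of $\widetilde{U}_i$ to $D_i$ and $C_i$, a Cholesky of the $(B-r)\times(B-r)$ diagonal block, and rank-$(B-r)$ Schur updates of the touched blocks via~\eqref{eq:sm_L}. Charging $\mathcal{O}(B^2)$ to each nonzero block that is created, rotated, or updated yields the $B^2\#L$ term, while the genuinely cubic dense factorizations, concentrated at the denser later levels and the final dense block, are charged against the block-count difference to give $B^3(\#L_{K+1}-\#L_1)$. For memory, storing $L$ costs one $B\times(B-r)$ dense block per nonzero block, i.e.\ $\mathcal{O}(B(B-r)\#L)$, and storing the $K$ block-diagonal rotation factors $Q_j$ costs $\mathcal{O}(KNB)$.

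I expect the main obstacle to be the sparsity-propagation bound $\bsp{A_j}{M/J^j}{rJ\times rJ}\leqslant\bsp{A^{2^j}}{M/J^j}{BJ^j\times BJ^j}$: I must verify that neither the compression step (which the text around~\eqref{eq:rreav} argues leaves the far-block locations unchanged) nor the super-blocking into $rJ\times rJ$ blocks introduces fill outside the squared pattern, and this is precisely the point at which the choice of the original ordering enters, as flagged after~\eqref{eq:bsp} and developed in Section~\ref{sec:complan}. Making the constant in the low-rank-approximation cost explicit, so that the per-row work truly collapses into the $B^2\#L$ and $B^3(\#L_{K+1}-\#L_1)$ bookkeeping rather than depending on the adaptively chosen rank, is the secondary delicate point.
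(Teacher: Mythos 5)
Your proposal is correct and follows essentially the same route as the paper: the paper likewise assembles the $K$-level factorization by iterating the one-level step (written out directly rather than by formal induction), takes $Q$ as the product of the permutation and block-diagonal unitary factors, counts $\#L_1 = 2\#\bsp{A}{M}{B\times B}$, $\#L_j = 2\#\bsp{A_{j-1}}{M/J^{j-1}}{rJ\times rJ}$ for $j=2,\dots,K$, and $\#L_{K+1}=\frac{1}{2}(M/J^K)^2r^2$, then iterates the squaring bound~\eqref{eq:bsp} to reach~\eqref{eq:l_nnz}, and reads off the memory cost exactly as you do. The only minor divergence is in the time-complexity bookkeeping: the paper attributes the cubic term to the SVDs of the far blocks (sweep $i$ has $\#L_{i+2}-\#L_{i+1}$ of them, and the sum telescopes to $\mathcal{O}(B^3(\#L_{K+1}-\#L_1))$), whereas you charge the cubic work to dense factorizations concentrated at the later levels --- the resulting bounds coincide, so this is a difference of attribution rather than substance.
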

\begin{proof}
After $K$ levels,
\begin{equation}\label{eq:ce_full}
	\begin{aligned}
		\breve{P}_K\breve{Q}_K \dots  \breve{P}_1 \breve{Q}_1 \, {\color{royalblue}A_0}\, \breve{Q}_1^{\top}\breve{P}_1^{\top} \dots  \breve{Q}_K^{\top}\breve{P}_K^{\top} \approx \\
		 \begin{bmatrix} L_1& {\color{royalblue}\breve{L}}_2 &\dots &{\color{royalblue}\breve{L}}_K \end{bmatrix}\begin{bmatrix} L_1^{\top}\\ {\color{royalblue}\breve{L}}_2^{\top} \\\vdots \\ {\color{royalblue}\breve{L}}_K^{\top} \end{bmatrix}+\begin{bmatrix}
0_{N_*\times N_{*}} &0 \\
0 & A_K
\end{bmatrix}
\end{aligned},\end{equation}
where $N_{*} = \sum_{j=1}^{K}N_{L_j}.$ {\color{royalblue}Using equation~\eqref{eq:init_perm},} denote
\begin{equation}\label{eq:q} Q = \left ( \prod_{j=1}^{K}\breve{P}_j \breve{Q}_j\right) P,\end{equation}
The remainder is factorized exactly as
$$A_K = L_{K+1}L_{K+1}^{\top}.$$ Denote
$$L = \begin{bmatrix}
\begin{bmatrix}
\\ L_1\\ \\ \\
\end{bmatrix}
&\begin{matrix}
0\\
\begin{bmatrix}
 \\L_2 \\ \\
\end{bmatrix}
\end{matrix}
&\dots &
\begin{matrix}
0\\\vdots \\0\\
\begin{bmatrix}
L_{K+1}
\end{bmatrix}
\end{matrix}
\end{bmatrix} = \begin{bmatrix} L_1 & \breve{L}_2 & \dots & \breve{L}_{K+1} \end{bmatrix},$$
where $L_j$ are rectangular block-triangular matrices. {
Using introduced notation and equation~\eqref{eq:ce_full} we arrive at~\eqref{eq:ce}.
The number $\#L$ of nonzero blocks in the factor $L$ is}
\begin{equation}\#L = \sum_{j=1}^{K+1} \#L_j.\label{eq:suml}\end{equation}
Using \eqref{eq:L},
$$\#L_1 {\color{royalblue} \leqslant} 2 \#\bsp{{\color{royalblue}A_0}}{M/J}{B\times B},$$
for $j = 2 \dots K$:
\begin{equation}
\label{eq:aj}
\#L_j {\color{royalblue} \leqslant} 2 \#\bsp{A_{j-1}}{M/J^{j-1}}{rJ \times rJ},
\end{equation}
and for $j = K+1$:
$$\#L_{K+1} {\color{royalblue} \leqslant}  \frac{1}{2}(M/J^K)^2 r^2.$$
Using equation \eqref{eq:bsp} we can see that
$$\bsp{A_K}{(M/J^K)}{r \times r} \leqslant \bsp{A_{K-1}^2}{(M/J^{K})}{rJ \times rJ} \leqslant \dots $$ $$\dots  \leqslant \bsp{{\color{royalblue}A_0}^{2^K}}{(M/J^{K})}{BJ^{K} \times BJ^{K}}.$$
Finally{, we obtain equation~\eqref{eq:l_nnz}.

In the CE algorithm we need to store $Q$ and $L$ factors.
According to~\eqref{eq:q}, the factor $Q$ is a product of $K$ permutation matrices $\breve{P}_1,\dots,\breve{P}_K$  and $K$ block-diagonal matrices $\breve{Q}_1,\dots,\breve{Q}_K$ with block size $B$ {\color{royalblue}and number of blocks gets smaller at each step by a factor of $J$.  Since
\begin{equation} \sum_{k=0}^{\infty} \frac{N}{J^k}B = NB \frac{J}{J-1},
	\label{eq:req01}
\end{equation} the matrix $Q$ can be stored using $\mathcal{O}(NB)$ memory.}
The memory requirement for the matrix $L$ is $\mathcal{O}(B(B-r)\#L)$.
Therefore, the factorization requires  {\color{royalblue}$\mathcal{O}(B(B-r)\#L + NB)$} memory.

Now let us estimate the number of operations in the construction of the CE factorization.
The CE factorization is done in $K$ sweeps, each sweep consists of several compression and elimination steps. Let us compute the computational cost of the $i$-th sweep.
Compression step requires the computation of the SVD factorizations for the the far blocks and multiplication
of the block rows and columns by the left SVD factor.
The matrix $A_i$ has  $(\#L_{i+2}-\#L_{i+1})$ far blocks of size $B\times B$, thus the SVD factorizations require $\mathcal{O}(B^3(\#L_{i+2}-\#L_{i+1}))$ operations.
The multiplication of the matrix $A_i$ by the unitary block-diagonal matrix with block size $B$ (since the matrix $A_i$ has $\#L_{i+1}$ number of blocks) requires $\mathcal{O}(B^2\#L_{i+1})$ operations.
Elimination procedure (using the block Cholesky factorization~\eqref{eq:bl_chol}) for all rows of the matrix $A_i$ requires  $\mathcal{O}((B-r)^2\#L_{i+1})$ operations.
Thus, $i$-th sweep of the CE algorithm costs $\mathcal{O}\left((B)^3(\#L_{i+1}-\#L_{i}) + B^2\#L_{i+1}\right)$.
Total computational cost of $K$ sweeps of the CE algorithm is
$$t = \sum_{i = 0}^{K} \left(\mathcal{O}(B^3(\#L_{i+2}-\#L_{i+1})+ B^2\#L_{i+1})\right) = $$
$$\mathcal{O}\left(B^3(\#L_{K+1}-\#L_1)\right) + B^2 \sum_{i = 0}^{K} \mathcal{O}(\#L_{i+1}),$$
taking into account equation \eqref{eq:suml},
$$t = \mathcal{O}\left(B^3(\#L_{K+1}-\#L_1) + B^2\#L\right).$$}
\end{proof}
\begin{proposition}
The system $Ax = b$ with a precomputed CE-factorization of the matrix $A$ ($A = Q^{\top}LL^{\top}Q$) can be solved in $\mathcal{O}((B-r)B\#L + NB)$ operations.
\end{proposition}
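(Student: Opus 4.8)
The plan is to solve the system by applying the precomputed factors of $A = Q^{\top}LL^{\top}Q$ in sequence and to bound the cost of each application separately. Writing $Ax = b$ as $Q^{\top}LL^{\top}Qx = b$, the solution is obtained in four stages: first form $\tilde b = Qb$; then solve the lower-triangular system $Lz = \tilde b$ by forward substitution; then solve the upper-triangular system $L^{\top}w = z$ by back substitution; and finally set $x = Q^{\top}w$. Correctness is immediate from the factorization $A = Q^{\top}LL^{\top}Q$ together with the orthogonality of $Q$, so only the operation count has to be established, and it suffices to bound the four stages and add them.

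First I would bound the cost of applying $Q$ and $Q^{\top}$. By the definition $Q = \prod_{j=1}^{K}\breve{P}_j\breve{Q}_j$, the matrix $Q$ is a product of $K$ permutation matrices and $K$ block-diagonal orthogonal matrices $\breve{Q}_j$ whose diagonal blocks have size $B$. A permutation applied to a vector costs $\mathcal{O}(N)$, whereas a block-diagonal matrix with $N/B$ diagonal blocks of size $B\times B$ costs $\mathcal{O}\!\left((N/B)\,B^2\right) = \mathcal{O}(NB)$ per matrix--vector product; summing over the $K$ levels gives $\mathcal{O}(NKB)$ for applying $Q$, and the same bound for $Q^{\top}$. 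Next I would bound the two triangular solves. The factor $L$ is block lower-triangular with $\#L$ nonzero blocks, each of dimension at most of order $B\times(B-r)$ (the variable block sizes across levels, from $B$ on the first level down to the $rJ\approx B$ blocks at deeper levels, are all bounded in this way), so each nonzero block contributes $\mathcal{O}(B(B-r))$ operations in a sparse forward substitution, while the diagonal-block solves reuse the precomputed Cholesky factors $\widehat{L}_i$ and contribute only a lower-order term. Hence $Lz = \tilde b$ costs $\mathcal{O}(B(B-r)\#L)$, and by symmetry so does $L^{\top}w = z$.

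The main obstacle is the bookkeeping for the triangular solves: one must verify that, despite the variable block sizes introduced at successive levels and by the permutations $P_j$, the per-block work never exceeds $\mathcal{O}(B(B-r))$, so that the total nonzero-entry count of $L$---already shown in Proposition~\ref{prop:CE_L} to be $\mathcal{O}(B(B-r)\#L)$---directly controls the substitution cost. Once this is in place, adding the four stages gives $\mathcal{O}\!\left((B-r)B\#L\right)$ for the forward and back solves together with $\mathcal{O}(NKB)$ for applying $Q$ and $Q^{\top}$, which yields the claimed bound $\mathcal{O}\!\left((B-r)B\#L + NKB\right)$; intuitively, the solve cost matches the memory footprint of $L$ plus the cost of storing and applying $Q$, as expected.
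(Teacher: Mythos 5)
Your proposal is correct and follows essentially the same route as the paper's proof: both reduce the solve to applying $Q$, two sparse block-triangular substitutions with $L$ and $L^{\top}$, and applying $Q^{\top}$, bounding the orthogonal factors by $\mathcal{O}(NKB)$ via the $K$ permutation and block-diagonal factors and each triangular solve by $\mathcal{O}((B-r)B\#L)$ from the per-block cost of the $\#L$ nonzero blocks. Your extra care about variable block sizes across levels is a reasonable refinement the paper leaves implicit, but it does not change the argument.
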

\begin{proof}
Since
$$Q^{\top}LL^{\top}Q x = b,$$
then
$$ x = Q^{\top}L^{-\top}L^{-1}Qb.$$
We need to compute the matrix $Q$ and the matrix $Q^{\top} $ by vector products. According to~\eqref{eq:q}, the matrices $Q$ and $Q^{\top}$ are the product of $K$ permutation matrices $\breve{P}_1,\dots,\breve{P}_K$ and $K$ block-diagonal matrices $\breve{Q}_1\dots \breve{Q}_K$ with block size $B$
{\color{royalblue}
and number of blocks gets smaller at each step by a
factor of $J$. Taking into account equation~\eqref{eq:req01},
the total computational cost of the matrix $Q$ and the matrix $Q^{\top} $ by vector products is $\mathcal{O}(NB)$ operations.
}

Since $L$ is a block-triangular sparse matrix with $\#L$ nonzero $(B-r)\times B$ blocks,
the solution of the system with matrix $L$ requires  $\mathcal{O}((B-r)B\#L)$ operations. Similarly, the solution of the system with matrix $L^{\top}$ requires $\mathcal{O}((B-r)B\#L)$ operations.
In total, we need {\color{royalblue}$\mathcal{O}((B-r)B\#L + NB)$} operations to solve the system $Q^{\top}LL^{\top}Q x = b$.
\end{proof}

\subsection{Pseudo code} Here we present scheme of the CE factorization algorithm.
\begin{algorithm}[H]
\SetKwProg{D}{Input:}{}{}
\SetKwProg{ol}{Level elimination: (see the Algorithm \ref{al:ol})}{}{}
\SetKwProg{I}{Initialization:}{}{}
\SetKwFunction{Out}{Output:}
\SetKwProg{fact}{Factorization:}{}{}
\SetKwProg{bs}{Backward step:}{}{}
\SetKwProg{O}{Output:}{}{}
\SetKw{ret}{return:}
\caption{CE algorithm}
\D{}{
{\color{royalblue}$A \in \mathbb{R}^{N\times N}$ - sparse matrix,\\
$A_0 = P A P^{\top}$, $A_0 \in\mathbb{R}^{MB \times MB}$ - block sparse matrix,\\}
$K$ - number of levels\\
$\varepsilon$ or $r$ - rank revealing parameter\\
}
\fact{}{
\For{j = 1 \dots  K}{
    $M_j = \frac{N}{BJ^{(j-1)}}$ - number of blocks on current level\\
    \ol{}{
    $\bold{Input:}$ $A_{j-1}$, $M_j$, $r$ or $\varepsilon$\\
    $\bold{Output:}$ ${\color{royalblue}\check{A}}_j$, ${\color{royalblue}\check{L}}_{j}$, $Q_j$}
    $L_j =  P_j^{\top}{\color{royalblue}\check{L}}_{j},$\\
    $A_j = P_j{\color{royalblue}\check{A}}_jP_j^{\top}$ - remainder.\\
	{\color{royalblue}
	Extend $Q_j$ by identity matrix, obtain $\breve{Q}_j$\\
	Extend $P_j$ by identity matrix, obtain $\breve{P}_j$\\
	Extend $L_j$ by zero matrix, obtain $\breve{L}_j$\\
	}
}
Factorize $A_K = L_{K+1}L_{K+1}^{\top},$\\
$Q = \left(\prod_{j=1}^{K} {\color{royalblue}\breve{P}}_j{\color{royalblue}\breve{Q}}_j\right){\color{royalblue}P},$\\
$L = \begin{bmatrix} L_1 &{\color{royalblue}\breve{L}}_2& \dots  & {\color{royalblue}\breve{L}}_{K+1} \end{bmatrix}.$ \\ 
}
\O{}{ $L,Q$ (where $A \approx Q^{\top}LL^{\top}Q$).}
\end{algorithm}

~

\begin{algorithm}[H]
\SetKwProg{D}{Input:}{}{}
\SetKwProg{ol}{{\color{royalblue}Elimination of $j$-th level}}{}{}
\SetKwProg{C}{Compression step:}{}{}
\SetKwProg{E}{Elimination step:}{}{}
\caption{One level elimination}
\ol{}{
    \D{}{$A_{j-1}$, $M_j$, $\varepsilon$ or $r$\\}
	 {\color{royalblue}$\widehat{A}_{0} = A_{j-1}$}\\
    \For{i = 1 \dots  $M_j$}{
        \C{}{
            $\widetilde{R}_i=\begin{bmatrix}D_i&C_i&F_i&0\end{bmatrix}P^{\text{col}}_i$ - $i$-th block row {\color{royalblue} of $\widehat{A}_{i-1}$},\\
            $F_i \approx \widetilde{U}_i\begin{bmatrix} \widehat{F}_i\\0\end{bmatrix},$ $\widetilde{U}_i$ - unitary matrix, $\widehat{F}_i \in \mathbb{R}^{r\times BN^F_i}$ ,\\
            $\widetilde{R}_{i*} = \begin{bmatrix}\widetilde{U}_i^{\top}D_i\widetilde{U}_i&\widetilde{U}_i^{\top}C_i&\begin{bmatrix}\widehat{F}_i\\0\end{bmatrix}&0\end{bmatrix}P^{\text{col}}_i$. \\
        }
        \E{}{
            $\widetilde{S}_i = \begin{bmatrix}0_{r \times r}&0 \\ 0 & I_{(B-r)}\end{bmatrix}\widetilde{R}_{i*}$,\\
            Eliminate $\widetilde{S}_i$ using block Cholesky formula:\\
            $ {\color{royalblue}\widehat{A}}_{i-1} = \widetilde{L}_i\widetilde{L}_i^{\top} + \widetilde{A}_i, $\\
            Add $\widetilde{L}_i$ into matrix $\check{L}_j.$
         }
    }
${\color{royalblue}Q_j} = \mathrm{diag}(\widetilde{U}_1, \dots  ,\widetilde{U}_{M_j})$.\\
{\color{royalblue}$\check{A}_j = \widetilde{A}_{M_j}$}\\
$\bold{Output:}$ $ {\color{royalblue}\check{A}_j}$, {\color{royalblue}$\check{L}_j$}, {\color{royalblue}$Q_j$}
}
\label{al:ol}
\end{algorithm}

{
\subsection{CE complexity based on the graph properties}
\label{sec:complan}
Consider the undirected graph $\mathcal{G}_{\color{royalblue}A_0}$ associated with the block sparsity pattern of the symmetric matrix ${\color{royalblue}A_0}$: $i$-th graph node corresponds to $i$-th block rows and columns, if the block $(i,j)$ is nonzero then $i$-th and $j$-th graph nodes are connected by an edge. In this section we clarify the connection between properties of the graph $\mathcal{G}_{\color{royalblue}A_0}$ and the complexity of the CE algorithm.

For example, consider the matrix ${\color{royalblue}A_0}$ with the graph shown in Figure~\ref{fig:sub01}. This matrix can arise from the the 2D elliptic equation discretized on a uniform square grid in $[0,1]^2$ by finite-difference scheme with $9$-point stencil.
Note that each graph node is connected only with a fixed number of nearest spatial neighbors (we call this property \emph{the graph locality}.)

According to \eqref{eq:bsp}, one sweep of the CE algorithm leads to squaring of the block sparsity pattern of the remainder. 
Squaring of the matrix ${\color{royalblue}A_0}$ spoils the locality of the graph $\mathcal{G}_{{\color{royalblue}A_0}}$: if two nodes were connected to the same node in the graph $\mathcal{G}_{\color{royalblue}A_0}$, they are connected in the graph  $\mathcal{G}_{{\color{royalblue}A_0}^2}$
(see the graph $\mathcal{G}_{{\color{royalblue}A_0}^2}$ in Figure~\ref{fig:sub02}). We call this step ``squaring''
($\mathcal{G}_{{\color{royalblue}A_0}^2} = \mathbf{Sq}(\mathcal{G}_{\color{royalblue}A_0})$).
Note that the squaring procedure significantly increases the number of edges in the graph $\mathcal{G}_{{\color{royalblue}A_0}^2}$.

The next step of the CE factorization joins block rows and columns into super-blocks by groups of $J$.
We obtain the matrix $A_1$ from equation~\eqref{eq:rem1}.
For the graph $\mathcal{G}_{{\color{royalblue}A_0}^2}$
it means joining nodes into super-nodes by groups of $J$. We call this step ``coarsening''
$\mathcal{G}_{A_1} = \mathbf{Coars}(\mathcal{G}_{{\color{royalblue}A_0}^2})$.
Coarsened graph $\mathcal{G}_{A_1}$ has better locality than the graph $\mathcal{G}_{{\color{royalblue}A_0}^2}$.

In this particular example the graph structure of the initial matrix ${\color{royalblue}A_0}$ and of the squared-coarsened matrix $A_1$ is very similar, see Figure.~\ref{fig:sub03}.
During the CE algorithm we obtain the graphs $ \mathcal{G}_{{\color{royalblue}A_0}}, \mathcal{G}_{A_1}, \dots, \mathcal{G}_{A_K}$, where $\mathcal{G}_{A_i} = \mathbf{Coars}(\mathbf{Sq}(\mathcal{G}_{A_{i-1}})), $ $ \forall i = 1,\dots,K$.

\begin{figure}[H]
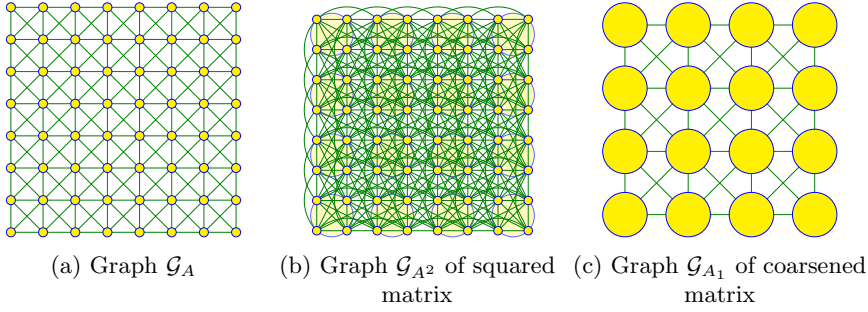

\centering
\begin{subfigure}[t]{.3\textwidth}
  \centering
  \resizebox{.8\textwidth}{!}{\tikz{
    \draw[step=1cm,black!50!green] (0,0) grid (7,7);
    \foreach \i in {0,...,6}{
        \foreach \j in {0,...,6}{
            \draw[line width=.7pt,black!50!green] (\i,\j) -- (\i+1, \j+1);
            \draw[line width=.7pt,black!50!green] (\i,\j+1) -- (\i+1, \j);
        }
    }
    \foreach \i in {0,...,7}{
        \foreach \j in {0,...,7}{
            \draw[line width=.7pt,blue,fill=yellow] (\i,\j) circle (4pt);

        }
    }
}}
  \caption{\centering Graph $\mathcal{G}_A$}
  \label{fig:sub01}
\end{subfigure}%
\begin{subfigure}[t]{.3\textwidth}
  \centering
  \resizebox{.8\textwidth}{!}{\tikz{
    \foreach \i in {0,...,3}{
        \foreach \j in {0,...,3}{
            \draw[line width=.7pt,blue!60!white,fill=yellow!30!white] (2*\i+0.5,2*\j+0.5) circle (20pt);
        }
    }
    \draw[step=1cm,black!50!green] (0,0) grid (7,7);
    \foreach \i in {0,...,6}{
        \foreach \j in {0,...,6}{
            \draw[line width=.7pt,black!50!green] (\i,\j) -- (\i+1, \j+1);
            \draw[line width=.7pt,black!50!green] (\i,\j+1) -- (\i+1, \j);
        }
    }
    \foreach \i in {0,...,7}{
        \foreach \j in {2,...,7}{
            \draw [line width=.7pt,black!50!green](\i,\j) arc (135:225:1.41);
        }
    }
    \foreach \i in {2,...,7}{
        \foreach \j in {0,...,7}{
            \draw [line width=.7pt,black!50!green](\i,\j) arc (45:135:1.41);
        }
    }

    \foreach \i in {0,...,5}{
        \foreach \j in {2,...,7}{
        }
    }
    \foreach \i in {2,...,7}{
        \foreach \j in {2,...,7}{
        }
    }
    \foreach \i in {1,...,7}{
        \foreach \j in {2,...,7}{
            \draw [line width=.7pt,black!50!green](\i,\j) -- (\i-1,\j-2);
        }
    }
    \foreach \i in {2,...,7}{
        \foreach \j in {1,...,7}{
            \draw [line width=.7pt,black!50!green](\i,\j) -- (\i-2,\j-1);
        }
    }
    \foreach \i in {1,...,7}{
        \foreach \j in {0,...,5}{
            \draw [line width=.7pt,black!50!green](\i,\j) --  (\i-1,\j+2);;
        }
    }
    \foreach \i in {2,...,7}{
        \foreach \j in {0,...,6}{
            \draw [line width=.7pt,black!50!green](\i,\j) -- (\i-2,\j+1);
        }
    }
    \foreach \i in {0,...,7}{
        \foreach \j in {0,...,7}{
            \draw[line width=.7pt,blue,fill=yellow] (\i,\j) circle (4pt);
        }
    }

}}
  \caption{\centering Graph~$\mathcal{G}_{A^2}$ of squared matrix}
  \label{fig:sub02}
\end{subfigure}
\begin{subfigure}[t]{.3\textwidth}
  \centering
  \resizebox{.8\textwidth}{!}{\tikz{
    \draw[step=2cm,black!50!green,shift={(0.5,0.5)}] (0,0) grid (6,6);
         \foreach \i in {0,...,2}{
        \foreach \j in {0,...,2}{
            \draw[black!50!green] (2*\i+0.5,2*\j+0.5) -- (2*\i+2.5,2*\j+2.5);
        }
    }
     \foreach \i in {0,...,2}{
        \foreach \j in {0,...,2}{
            \draw[black!50!green] (2*\i+2.5,2*\j+0.5) -- (2*\i+0.5,2*\j+2.5);
        }
    }
    \foreach \i in {0,...,3}{
        \foreach \j in {0,...,3}{
            \draw[line width=.7pt,blue,fill=yellow] (2*\i+0.5,2*\j+0.5) circle (20pt);
        }
    }
}}
  \caption{\centering Graph~$\mathcal{G}_{A_1}$ of coarsened matrix}
  \label{fig:sub03}
\end{subfigure}%
\caption{Example of squaring-coarsening procedure for graph~$\mathcal{G}_{A}$}
\label{fig:ex1}
\end{figure}$\\$

Note that the number of edges in the graphs $ \mathcal{G}_{{\color{royalblue}A_0}}, \mathcal{G}_{A_1}, \dots, \mathcal{G}_{A_K}$ is related to the number $\#L$, that we need to estimate, see Proposition~\ref{prop:CE_L}.
\begin{proposition}
\label{prop:gl}
The number $\#L$ of nonzero blocks in the factor $L$ is equal to the total number of edges of all graphs  $ \mathcal{G}_{{\color{royalblue}A_0}}, \mathcal{G}_{A_1}, \dots, \mathcal{G}_{A_K}$:
$$\#L { \color{royalblue}\leqslant \sum_{i=0}^{K} \mathbf{Edge\_num}(\mathcal{G}_{A_i})}.$$
\end{proposition}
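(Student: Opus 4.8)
The plan is to start from the decomposition $\#L = \sum_{j=1}^{K+1}\#L_j$ established in~\eqref{eq:suml} and to reinterpret each summand $\#L_j$ as the number of edges of the $j$-th graph in the sequence $\mathcal{G}_{A_0},\mathcal{G}_{A_1},\dots,\mathcal{G}_{A_K}$, where $A_0 \equiv A$. Since the proof of Proposition~\ref{prop:CE_L} already supplies explicit per-level block counts, the remaining work is purely a translation between block-sparsity patterns and graph edges, carried out level by level.

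First I would set up the dictionary between $\#\bsp{A_{j-1}}{\cdot}{\cdot}$ and $\mathbf{Edge\_num}(\mathcal{G}_{A_{j-1}})$. By the definition of $\mathcal{G}_{A_{j-1}}$, every nonzero block of $A_{j-1}$ corresponds either to a node (diagonal block) or to an edge (off-diagonal block), and the symmetry of $A_{j-1}$ pairs the blocks $(p,q)$ and $(q,p)$ into a single undirected edge. I would use this to rewrite $\#\bsp{A_{j-1}}{\cdot}{\cdot}$ entirely in terms of $\mathbf{Edge\_num}(\mathcal{G}_{A_{j-1}})$ and the node count of the graph.

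Next I would feed in the per-level counts from the proof of Proposition~\ref{prop:CE_L}: the relations $\#L_1 = 2\#\bsp{A}{M}{B\times B}$ and $\#L_j = 2\#\bsp{A_{j-1}}{M/J^{j-1}}{rJ\times rJ}$ from~\eqref{eq:L} and~\eqref{eq:aj}. The factor $2$ appearing here comes from the permutation $P_j$, which splits every block into its eliminated and non-eliminated parts; I would show that, under the graph convention of Section~\ref{sec:complan}, this splitting is precisely what promotes the block count of $A_{j-1}$ to the edge count of $\mathcal{G}_{A_{j-1}}$, giving $\#L_j = \mathbf{Edge\_num}(\mathcal{G}_{A_{j-1}})$ for each elimination level $j=1,\dots,K$. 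For the final summand I would treat the remainder separately: the graph $\mathcal{G}_{A_K}$ of the densely factorized block $A_K$ is complete, so its edge count is $\tfrac{1}{2}(M/J^K)^2$ up to the block size, matching $\#L_{K+1}=\tfrac{1}{2}(M/J^K)^2 r^2$. Summing the level-by-level identities and recalling $\mathcal{G}_{A_0}=\mathcal{G}_A$ then yields the stated formula.

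The hard part will be the constant bookkeeping in the middle step, namely reconciling the factor of $2$ produced by the eliminated/non-eliminated splitting with the factor of $2$ that relates symmetric off-diagonal blocks to undirected edges, and correctly absorbing (or cancelling) the diagonal-block/node contributions so that the sum collapses to a clean edge count at each level rather than an edge-plus-node count. Fixing the exact graph convention---whether self-loops are counted and whether edges are ordered or unordered---so that all of these factors line up is the delicate point; once it is pinned down, the remainder is a routine termwise sum that telescopes into $\mathbf{Edge\_num}(\mathcal{G}_A)+\sum_{i=1}^{K}\mathbf{Edge\_num}(\mathcal{G}_{A_i})$.
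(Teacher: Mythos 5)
Your proposal follows essentially the same route as the paper's own proof: the paper likewise obtains the identity by taking the per-level counts $\#L_1 = 2\#\bsp{A}{M}{B\times B}$, $\#L_j = 2\#\bsp{A_{j-1}}{M/J^{j-1}}{rJ\times rJ}$ and the dense remainder term assembled in~\eqref{eq:l_nnz}, identifying block counts with edge counts ``by the definition'' of $\mathcal{G}_{A_i}$, and summing over levels. The constant bookkeeping you flag as the delicate point (reconciling the factor $2$ from the eliminated/non-eliminated split with the pairing of symmetric off-diagonal blocks into undirected edges, plus the diagonal-block contributions) is precisely what the paper passes over in one sentence, so your plan reproduces the published argument at its actual level of rigor.
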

\begin{proof}
The number of edges of the graph $\mathcal{G}_{A_i}$ is equal to the number of nonzero blocks in the matrix $A_i$ $\forall i\in 0 \dots K$ by the definition.
Taking into account equation~\eqref{eq:l_nnz} we obtain:
\begin{equation}
\label{eq:l1}
\begin{aligned}
\#L { \color{royalblue}\leqslant } 2\#\bsp{{\color{royalblue}A_0}}{M}{B \times B} + 2 \left( \sum_{j=1}^{K-1} \#\bsp{A^{2^{j}}}{M/J^{j}}{BJ^{j}\times BJ^{j}} \right) + \\
+ \frac{1}{2}(M/J^K)^2 r^2 = \mathbf{Edge\_num}(\mathcal{G}_{{\color{royalblue}A_0}})+\sum_{i=1}^{K} \mathbf{Edge\_num}(\mathcal{G}_{A_i}).
\end{aligned}
\end{equation}
\end{proof}

Thus, the minimization of the number $\#L$ is equivalent to the minimization of the total number of edges in graphs $\mathcal{G}_{{\color{royalblue}A_0}}, \mathcal{G}_{A_1}, \dots, \mathcal{G}_{A_K}$.

The coarsening procedure is the key to the reduction of the number of edges. Note that the coarsening procedure for all graphs $\mathcal{G}_{{\color{royalblue}A_0}}, \mathcal{G}_{A_1}, \dots, \mathcal{G}_{A_K}$ can be defined by a single initial permutation $P$ {\color{royalblue} from equation~\eqref{eq:init_perm}} and the number of nodes~$J$ that we join together (neighbors in the permutation~$P$ are joined by the groups of~$J$). Also note that, with known $J$, the coarsening procedure determines the initial permutation $P$. Thus, to minimize $\#L$, instead of optimizing the initial permutation~$P$ we can optimize the coarsening procedure, which can be much more intuitive.

Consider the example in Figure~\ref{fig:ex1}. The following proposition shows that for this example a good coarsening exists (and is very simple).

\begin{proposition}
\label{prop:2d}
Let the graph $\mathcal{G}_{{\color{royalblue}A_0}}$ be defined by a tensor product grid in $\mathbb{R}^2$, (grid points correspond to nodes of the graph $\mathcal{G}_{{\color{royalblue}A_0}}$, edges of the grid correspond to edges of the graph $\mathcal{G}_{{\color{royalblue}A_0}}$), like in the example in Figure~\ref{fig:ex1}.
If the coarsening procedure joins the closest nodes and each super-node has at least two nodes in each direction,
then each graph $ \mathcal{G}_{{\color{royalblue}A_0}}, \mathcal{G}_{A_1}, \dots, \mathcal{G}_{A_K}$ has $\mathcal{O}(N)$ edges.

\end{proposition}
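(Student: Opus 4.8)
The plan is to track, for each graph $\mathcal{G}_{A_i}$, a single geometric quantity — its \emph{connection radius} $r_i$, measured in the grid spacing of level $i$ — and to prove by induction on $i$ that $\mathcal{G}_{A_i}$ is, up to boundary effects, a translation-invariant grid graph in which two nodes are joined exactly when their integer coordinates differ by at most $r_i$ in the $\ell^\infty$ norm, with $r_i \le r_0$ for all $i$. Here $r_0$ is the radius of the initial stencil (e.g.\ $r_0 = 1$ for the $9$-point stencil of Figure~\ref{fig:sub01}), a constant independent of $N$. Once the radius stays bounded by the constant $r_0$, every node of $\mathcal{G}_{A_i}$ has at most $(2r_0+1)^2 - 1 = \mathcal{O}(1)$ neighbours, so $\mathcal{G}_{A_i}$ has at most $\tfrac12 N_i\big((2r_0+1)^2-1\big) = \mathcal{O}(N)$ edges, where $N_i \le N$ is the number of (super-)nodes at level $i$; this is exactly the assertion of the proposition.

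First I would analyse the squaring step $\mathbf{Sq}$. If $\mathcal{G}_{A_i}$ joins all lattice points within $\ell^\infty$-distance $r_i$, then two points $p,q$ have a common neighbour iff there is an integer lattice point $m$ with $\|p-m\|_\infty \le r_i$ and $\|m-q\|_\infty \le r_i$; coordinatewise this is possible exactly when $\|p-q\|_\infty \le 2r_i$. Hence $\mathcal{G}_{A_i^2}$ is again a uniform $\ell^\infty$ grid graph, now of radius $2r_i$ on the same grid. (Near the boundary such an $m$ may fall outside the grid, which only deletes edges, so $2r_i$ remains a valid upper bound.)

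The heart of the argument is the coarsening step $\mathbf{Coars}$, which partitions the lattice into blocks of at least $s \ge 2$ nodes per direction and treats each block as one super-node on a grid of spacing $s$. Two super-nodes whose coarse coordinates differ by $k \ge 1$ in some direction have closest fine nodes at distance exactly $s(k-1)+1$ in that direction, so they are joined in $\mathcal{G}_{A_{i+1}}$ iff $s(k-1)+1 \le 2r_i$, i.e.\ iff $k \le 1 + (2r_i-1)/s$. The new radius is therefore $r_{i+1} = \big\lfloor 1 + (2r_i-1)/s \big\rfloor$, and since $s \ge 2$ this is bounded by $\big\lfloor 1 + (2r_i-1)/2 \big\rfloor = \lfloor r_i + \tfrac12\rfloor = r_i$ because $r_i$ is an integer. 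Thus $r_{i+1} \le r_i$, the radius is non-increasing, and $r_i \le r_0$ for every level, completing the induction and hence the proof.

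The subtle point — and the step I expect to require the most care — is precisely this last inequality at the borderline case $s=2$: the continuous recursion $r \mapsto 1 + (2r-1)/s$ has a finite fixed point $(s-1)/(s-2)$ only for $s \ge 3$ and diverges for $s = 2$, so boundedness in the minimally-coarsened case $s=2$ relies entirely on the integrality of lattice distances (the floor absorbing the surplus $\tfrac12$). I would therefore state the minimal block distance $s(k-1)+1$ and the integrality of $r_i$ explicitly. The remaining technicalities — non-square blocks (apply the one-dimensional estimate in each direction, using that the per-direction block size is $\ge 2$), finite-grid boundary effects (they only remove nodes and edges, preserving the upper bounds), and confirming that squaring-then-coarsening preserves translation invariance of the coarse graph — are routine and do not affect the $\mathcal{O}(N)$ conclusion.
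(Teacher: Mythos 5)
Your proof is correct and takes essentially the same route as the paper's: both track the maximum $\ell^\infty$ edge length on the index grid (your connection radius $r_i$, the paper's $\tilde{s}$), observe that squaring at most doubles it while coarsening with at least two nodes per direction restores it, and conclude bounded degree and hence $\mathcal{O}(N)$ edges per level. Your version is a mild quantitative sharpening — the paper fixes the invariant $\tilde{s}=1$ and handles the coarsening step by contradiction, whereas you run the explicit recursion $r_{i+1} = \left\lfloor 1 + (2r_i-1)/s \right\rfloor$ for an arbitrary initial radius and correctly isolate the integrality argument needed in the borderline case $s=2$.
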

\begin{proof}
Consider the graph $\mathcal{G}_{{\color{royalblue}A_0}}$ based on the tensor product grid in $\mathbb{R}^2$, let each node $n$ of this graph have index $(i,j)$.
Let $s(e)$ be the length of the edge $e$ that connects nodes $n_1$ and $n_2$ with indices $(i_1,j_1)$ and $(i_2,j_2)$ if
$$s(e) = \max(|i_1-i_2|, |j_1-j_2|).$$
Let $\tilde{s}(\mathcal{G}_{{\color{royalblue}A_0}})$ be the  maximum edge length of the graph $\mathcal{G}_{{\color{royalblue}A_0}}$:
$$\tilde{s}(\mathcal{G}_{{\color{royalblue}A_0}}) = \max_{e_i\in\mathcal{G}_{{\color{royalblue}A_0}}}s(e_i).$$

For the graph  $\mathcal{G}_{A_i}$ of the matrix $A_i$, $\forall i\in 0\dots (K-1)$, $\tilde{s}(\mathcal{G}_{A_i}) = 1$. By the squaring procedure for the graph $\mathcal{G}_{A_i^2} = \mathbf{Sq}(\mathcal{G}_{A_l})$,  $\tilde{s}(\mathcal{G}_{A_i^2}) = 2$.
Let us prove that after the coarsening procedure described in the proposition hypothesis
($\mathcal{G}_{A_{i+1}}= \mathbf{Coars}(\mathcal{G}_{A_i^2})$) we have
$\tilde{s}(\mathcal{G}_{A_{i+1}}) = 1$. If $\tilde{s}(\mathcal{G}_{A_{i+1}}) > 1$, then, since each super-node has at least two nodes in each direction,
$\tilde{s}(\mathcal{G}_{A_i^2}) > 3$, this is a contradiction. Thus, $\tilde{s}(\mathcal{G}_{A_{i+1}}) = 1$.

If $\tilde{s}(\mathcal{G}_{A_{i+1}}) = 1$, then each node has a constant number of edges. Thus, the number of edges in the graph $\mathcal{G}_{A_i}, \forall i \in 0\dots (K-1)$ is $\mathcal{O}(N)$.
\end{proof}
\begin{corollary}
Proposition~\ref{prop:2d} is also true for $\mathbb{R}^d, d>2$.
\end{corollary}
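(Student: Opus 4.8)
The plan is to observe that the proof of Proposition~\ref{prop:2d} never actually uses the dimension being exactly two: it relies only on the triangle inequality for the index metric $s$ and on the hypothesis that every super-node spans at least two fine nodes in each coordinate direction. Both ingredients survive verbatim in $\mathbb{R}^d$, so I would simply recast the planar proof with the $d$-dimensional analogue of every object, after promoting the tensor-product grid in Proposition~\ref{prop:2d} to a tensor-product grid in $\mathbb{R}^d$.

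First I would extend the definitions. Each node of $\mathcal{G}_A$ now carries a multi-index $(i_1,\dots,i_d)$, and for an edge $e$ joining nodes with indices $(i_1^{(1)},\dots,i_d^{(1)})$ and $(i_1^{(2)},\dots,i_d^{(2)})$ I would set
$$s(e) = \max_{1 \le k \le d} |i_k^{(1)} - i_k^{(2)}|,$$
the Chebyshev ($\ell^{\infty}$) distance in index space, and keep $\tilde{s}(\mathcal{G}) = \max_{e \in \mathcal{G}} s(e)$ unchanged. For the tensor-product grid graph this gives $\tilde{s}(\mathcal{G}_A) = 1$, exactly as in the planar case.

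Next I would reproduce the inductive step $\tilde{s}(\mathcal{G}_{A_i}) = 1 \Rightarrow \tilde{s}(\mathcal{G}_{A_{i+1}}) = 1$. Squaring connects two nodes of $\mathcal{G}_{A_i}$ precisely when they share a common neighbor; since $s$ obeys the triangle inequality in any dimension, $\mathcal{G}_{A_i^2} = \mathbf{Sq}(\mathcal{G}_{A_i})$ satisfies $\tilde{s}(\mathcal{G}_{A_i^2}) \le 2$. For the coarsening step $\mathcal{G}_{A_{i+1}} = \mathbf{Coars}(\mathcal{G}_{A_i^2})$ I would run the same contradiction argument: if $\tilde{s}(\mathcal{G}_{A_{i+1}}) > 1$, then two super-nodes whose super-indices differ by at least $2$ in some direction $k$ are joined, so some fine node of one is adjacent to some fine node of the other in $\mathcal{G}_{A_i^2}$; but because every super-node occupies at least two consecutive fine indices in direction $k$, two super-nodes at super-distance $\ge 2$ are separated by at least one intervening super-node, which forces the fine-index gap in direction $k$ to be at least $3$, i.e. $\tilde{s}(\mathcal{G}_{A_i^2}) \ge 3$. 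This contradicts $\tilde{s}(\mathcal{G}_{A_i^2}) \le 2$, whence $\tilde{s}(\mathcal{G}_{A_{i+1}}) = 1$.

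Finally, $\tilde{s}(\mathcal{G}_{A_i}) = 1$ means every node is adjacent only to nodes within Chebyshev distance one, of which there are at most $3^d - 1$; this is a constant independent of $N$ for fixed $d$. Hence each graph $\mathcal{G}_{A_0}, \dots, \mathcal{G}_{A_K}$ has $\mathcal{O}(N)$ edges, which is the claim. Since the argument is a coordinate-wise transcription of the $d=2$ case, no genuinely new difficulty arises; the only point demanding care is the geometric separation step, namely that a super-distance of at least $2$ forces a fine-distance of at least $3$ in the chosen direction. That is exactly where the "at least two nodes in each direction" hypothesis enters, and verifying it reduces to the same one-dimensional counting used in the planar proof applied in each coordinate. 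I therefore expect this to be the most delicate piece of bookkeeping rather than a true obstacle.
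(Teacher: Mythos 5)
Your proposal is correct and follows essentially the same route as the paper: the paper's own proof of this corollary is a one-line remark that it goes ``analogously'' to the two-dimensional argument of Proposition~\ref{prop:2d}, and your write-up is precisely the faithful $d$-dimensional transcription of that argument (Chebyshev index metric, $\tilde{s}\le 2$ after squaring, the super-distance-$2$-forces-fine-distance-$3$ contradiction, and the $3^d-1$ neighbor bound). You have simply supplied the details the paper leaves implicit, including correctly identifying where the ``at least two nodes in each direction'' hypothesis is used.
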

\begin{proof}
Analogously to Proposition~\ref{prop:gl}.
\end{proof}
\begin{corollary}
\label{cor:comp}
Let the matrix $A_{\mathbf{tpg}}$ have the graph based on the tensor product grid in $\mathbb{R}^d$.
The CE factorization of the matrix $A_{\mathbf{tpg}}$ requires $\mathcal{O}(B^3N + B^2NK)$ operations and $\mathcal{O}(B(B-r)NK)$ memory.
\end{corollary}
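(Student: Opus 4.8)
The plan is to obtain Corollary~\ref{cor:comp} by feeding the edge estimates for a tensor-product grid into the abstract complexity bounds of Proposition~\ref{prop:CE_L}, using Proposition~\ref{prop:gl} as the bridge. First I would check that $A_{\mathbf{tpg}}$ meets the hypotheses of Proposition~\ref{prop:2d}: its block graph is exactly a tensor-product grid, and I would fix the coarsening to be the one of Proposition~\ref{prop:2d}, namely grouping nearest nodes into super-nodes with at least two nodes per coordinate direction. Proposition~\ref{prop:2d} together with its extension to $\mathbb{R}^d$ then guarantees that the squaring--coarsening composition preserves locality, i.e. $\tilde{s}(\mathcal{G}_{A_i}) = 1$ for every level $i = 0,\dots,K$, so each intermediate graph has bounded degree and hence $\mathbf{Edge\_num}(\mathcal{G}_{A_i}) = \mathcal{O}(N)$.

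Next I would convert these per-level edge counts into a bound on $\#L$. By Proposition~\ref{prop:gl}, $\#L = \mathbf{Edge\_num}(\mathcal{G}_{A}) + \sum_{i=1}^{K}\mathbf{Edge\_num}(\mathcal{G}_{A_i})$, a sum of $K+1$ terms each bounded by $\mathcal{O}(N)$; summing gives $\#L = \mathcal{O}(NK)$. (A geometric argument, using that the node count drops by the factor $J$ per level, in fact yields the sharper $\#L = \mathcal{O}(N)$, but the looser $\mathcal{O}(NK)$ bound is what matches the stated estimate.) I would also record that locality keeps the compression work, measured by $\#L_{K+1}-\#L_1$, at $\mathcal{O}(N)$: indeed $\#L_1 = 2\,\#\bsp{A}{M}{B\times B} = \mathcal{O}(N)$, and with the remainder reduced to $\mathcal{O}(1)$ blocks the final dense factor contributes $\#L_{K+1} = \mathcal{O}(N)$.

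Finally I would substitute these two estimates into Proposition~\ref{prop:CE_L}. The time bound $\mathcal{O}\!\left(B^3(\#L_{K+1}-\#L_1) + B^2\#L\right)$ becomes $\mathcal{O}(B^3 N + B^2 N K)$, and the memory bound $\mathcal{O}(B(B-r)\#L + KNB)$ becomes $\mathcal{O}(B(B-r)NK)$ once the $KNB$ term is absorbed (since $KNB \le B(B-r)NK$ whenever $B-r \ge 1$). This yields exactly the claimed complexities, and the case $d>2$ follows verbatim, invoking the $\mathbb{R}^d$ extension of Proposition~\ref{prop:2d} in place of the planar statement.

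I expect the only real content — and the main obstacle — to be the locality claim in the first paragraph; everything else is arithmetic substitution. Concretely, the delicate point is that squaring a grid graph destroys locality (it doubles the maximum edge length to $\tilde{s}=2$), and one must verify that the subsequent coarsening restores $\tilde{s}=1$ uniformly at every level rather than letting the edge length creep upward across levels. This is precisely the inductive step of Proposition~\ref{prop:2d}, so the work is in confirming that its hypothesis of at least two nodes per direction in each super-node holds for the chosen coarsening of $A_{\mathbf{tpg}}$ and propagates through all $K$ levels; a secondary subtlety is controlling the last-level term $\#L_{K+1}$ so that it does not spoil the $B^3 N$ estimate.
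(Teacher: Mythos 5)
Your proposal is correct and follows essentially the same route as the paper's own proof: invoke Proposition~\ref{prop:2d} (and its $\mathbb{R}^d$ extension) to get $\mathcal{O}(N)$ edges per level, use Proposition~\ref{prop:gl} to conclude $\#L = \mathcal{O}(NK)$ and $(\#L_{K+1}-\#L_1) = \mathcal{O}(N)$, then substitute into the complexity bounds of Proposition~\ref{prop:CE_L}. Your extra care about the last-level term $\#L_{K+1}$ and the absorption of the $KNB$ memory term only spells out steps the paper leaves implicit.
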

\begin{proof}
According to Proposition~\ref{prop:2d}, each graph $ \mathcal{G}_{{\color{royalblue}A_0}}, \mathcal{G}_{A_1}, \dots, \mathcal{G}_{A_K}$ has $\mathcal{O}(N)$ edges. Thus, by Proposition~\ref{prop:gl}, $\#L = \mathcal{O}(NK)$, $(\#L_{K+1} - \#L_1) = \mathcal{O}(N)$. Therefore, by Proposition~\ref{prop:CE_L}, memory complexity of the CE factorization of the matrix $A_{\mathbf{tpg}}$ is the following:
{\color{royalblue}
$$\mathbf{mem} = \mathcal{O}(B(B - r)\#L + NB)  = \mathcal{O}(B(B - r)NK + NB) = \mathcal{O}(B(B - r) NK),$$}
and the time complexity:
$$\mathbf{t} = \mathcal{O}(B^3(\#L_{K+1} - \#L_1) + B^2\#L)  = \mathcal{O}(B^3N + B^2NK).$$
\end{proof}

Note, that graph partitioning algorithms \cite{rajamanickam-hg_part-2012,devine-hg_part-2006} and, in particular,  the nested dissection procedure \cite{george-1973-nested} can be used for grouping the closest blocks during the coarsening procedure.

Let us now discuss the choice of the coarsening procedure in the case of geometric graphs. Consider the $k$-nearest neighborhood graph \cite{miller-gr_negh-1991}.
This graph has a geometric interpretation, where each node has at most  $k$ edges and has a sphere that contains all connected nodes and only them. For example, the graph $\mathcal{G}_{{\color{royalblue}A_0}}$ in Figure~\ref{fig:sub02} is a $8$-nearest neighborhood graph.
 We propose the following hypothesis.
\begin{hypothesis}
\label{hyp:knn}
Let graph $\mathcal{G}_{\color{royalblue}A_0}$ be a $k$-nearest neighborhood graph; let
 $h_{\min}$ and $h_{\max}$ be the minimum and maximum edge lengths; let $B$ be the maximum block size.
Let graphs $\mathcal{G}_{A_1}$, \dots $\mathcal{G}_{A_K}$ be obtained in $K$ squaring-coarsening steps.
Then there exists a coarsening procedure and a number
$$k_1 = k_1 \left(\frac{h_{\max}}{h_{\min}},d,B\right),$$
such that the graphs $\mathcal{G}_{A_1}$, \dots $\mathcal{G}_{A_K}$ are
$k_1$-nearest neighborhood graphs.
\end{hypothesis}
\begin{remark}
The $k$-nearest neighborhood graph has $\mathcal{O}(N)$ edges if $k$ does not depend on $N$.
Thus,  Hypothesis~\ref{hyp:knn} can be reformulated in the following way: if $\mathcal{G}_{\color{royalblue}A_0}$ is a $k$-nearest neighborhood graph, then there exists a coarsening procedure such that each of the graphs $\mathcal{G}_{A_1}$, \dots $\mathcal{G}_{A_K}$ has $\mathcal{O}(N)$ edges.
Thus, analogously to Corollary~\ref{cor:comp},
the CE factorization of the matrix with $k$-nearest neighborhood graph requires $\mathcal{O}(B^3N + B^2NK)$ operations and $\mathcal{O}(B(B − r)NK)$ memory.
\end{remark}
}

\section{Numerical experiments}

We compare the following solvers: CE($\varepsilon$) with backward step as direct solver, CE($\varepsilon$) and CE($r$) factorizations as a preconditioners for iterative symmetric solver MINRES, direct symmetric solver from the package CHOLMOD and MINRES with ILUt preconditioner.
The required accuracy is {set} $10^{-10}$. {Note that for the solution of PDEs this accuracy is much less than the discretization accuracy, so often a much larger threshold is required.}

For the tests we consider the system obtained from an equidistant cubic discretization of 3D diffusion equation.
\begin{equation}
\left.\begin{matrix}
 \mathrm{div}\left( \, k(x) \, \mathrm{grad}\, u(x)\right) = f(x), \quad x \in \Omega  \\
\hspace{-4.2cm} \left. u\right|_{\delta\Omega} = 0
\end{matrix} \right.,
\label{eq:lap}
\end{equation}
where $\Omega = [0,1]^3,$ {the diffusion tensor $k(x) = \mathrm{diag}(x_1^2+0.5, x_2^2+0.5,x_3^2+0.5)$, right hand side $f(x) = 1$.
CE algorithm is implemented { in Fortran using BLAS from Intel MKL.}
For MINRES we use Python library SciPy.}
Computations were performed on a server with 32 Intel\textsuperscript{\textregistered} Xeon\textsuperscript{\textregistered} E5-2640 v2 (20M Cache, 2.00 GHz) processors and with 256GB of RAM. Tests were run in a single-processor mode.

\subsection{{CE($\varepsilon$) factorization as a direct solver}}
The factorization obtained in CE($r$) procedure is not accurate enough to use it as a direct solver. Consider CE($\varepsilon$) factorization as an approximate direct solver.
In Table \ref{tab:err} we show the relative accuracy $\eta$ that can be achieved by a direct solution of the system with { a matrix approximated by }CE($\varepsilon$) factorization.
\begin{table}[H]
\centering
\begin{tabular}{ l | c  c  c  c }
Revealing parameter,~$\varepsilon$ & $10^{-2}$ & $10^{-4}$ & $10^{-6}$ & $10^{-8}$\\
\hline
Relative accuracy,~$\eta$& $4.0 \times 10^{-1}$ & $9.1 \times 10^{-3}$& $1.2 \times 10^{-5}$& $9.9 \times 10^{-7}$\\
Required memory,~MB & 553 & 1348 & 2494& 2671 \\
{Time,~sec} &  3.40107 &8.28388& 17.40455 & 29.65910\\
\end{tabular}
\caption{Factorization accuracy and required memory, $N = 65536$. Accuracy $\eta$ here is computed as: $\eta = \frac{||x_{CE} - x_*||}{||x_*||}$, where $x_{CE}$  is the obtained the solution and~$x_*$ is exact solution.}
\label{tab:err}
\end{table}
The main problem with the direct CE($\varepsilon$) solver is that it requires too much memory. On the other hand, CE($\varepsilon$) with large enough~$\varepsilon$ works fast and requires small amount of memory. This factorization as well as the CE($r$) factorization can be utilized as a very good preconditioner for iterative solvers, e.g. MINRES.
\subsection{CE($\varepsilon$), CE($r$) convergence}
In Figure~\ref{fig:conv} the convergence of MINRES with CE($\varepsilon$), CE($r$), ILUt preconditioners and MINRES without preconditioner are shown.

\begin{figure}[H]
\centering
\begin{subfigure}{.5\textwidth}
  \centering
  \scalebox{0.25}{
    \includegraphics{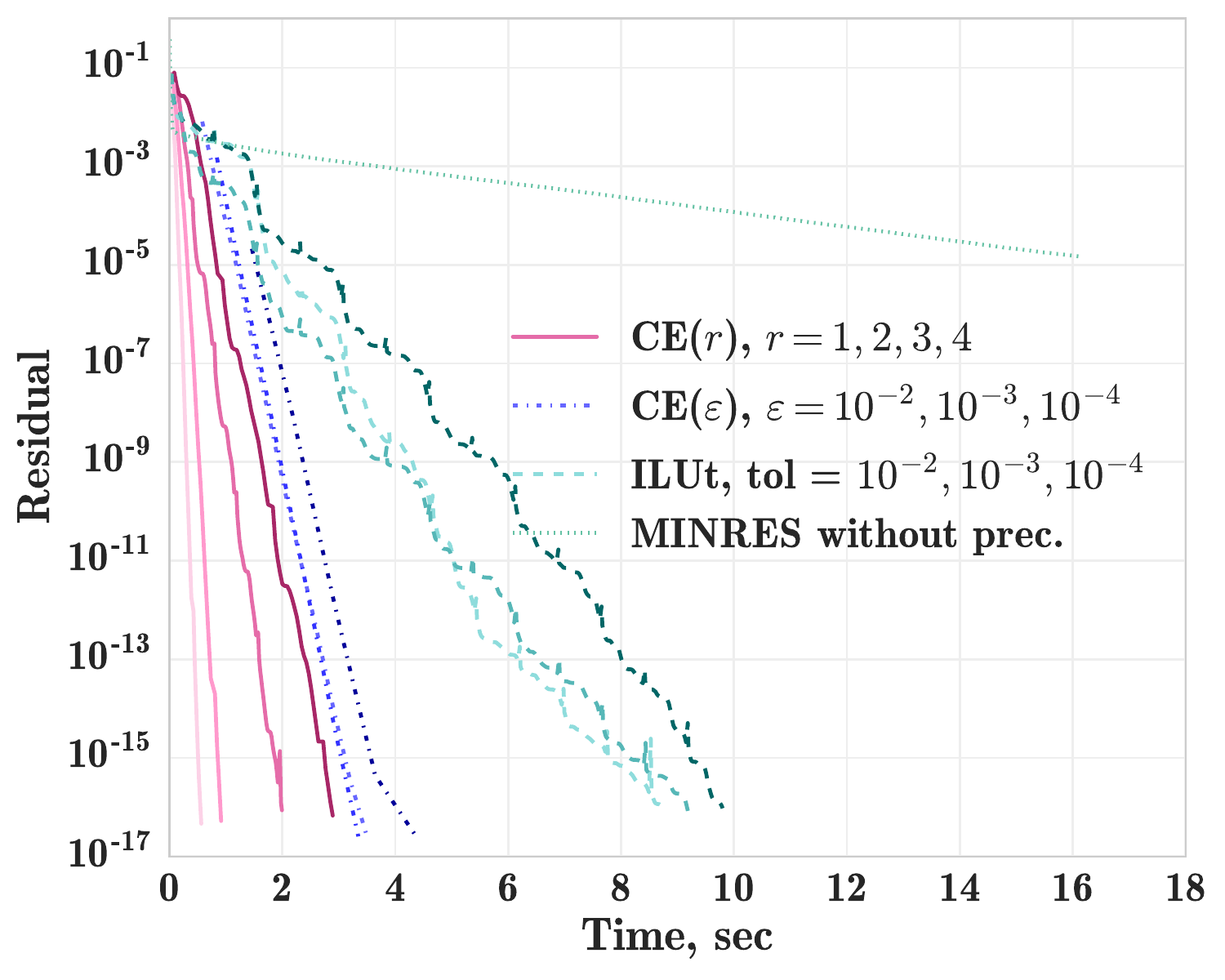}}
  \caption{Residual/time}
  \label{fig:hist_eps_r1}
\end{subfigure}%
\begin{subfigure}{.5\textwidth}
  \centering
  \scalebox{0.25}{
\includegraphics{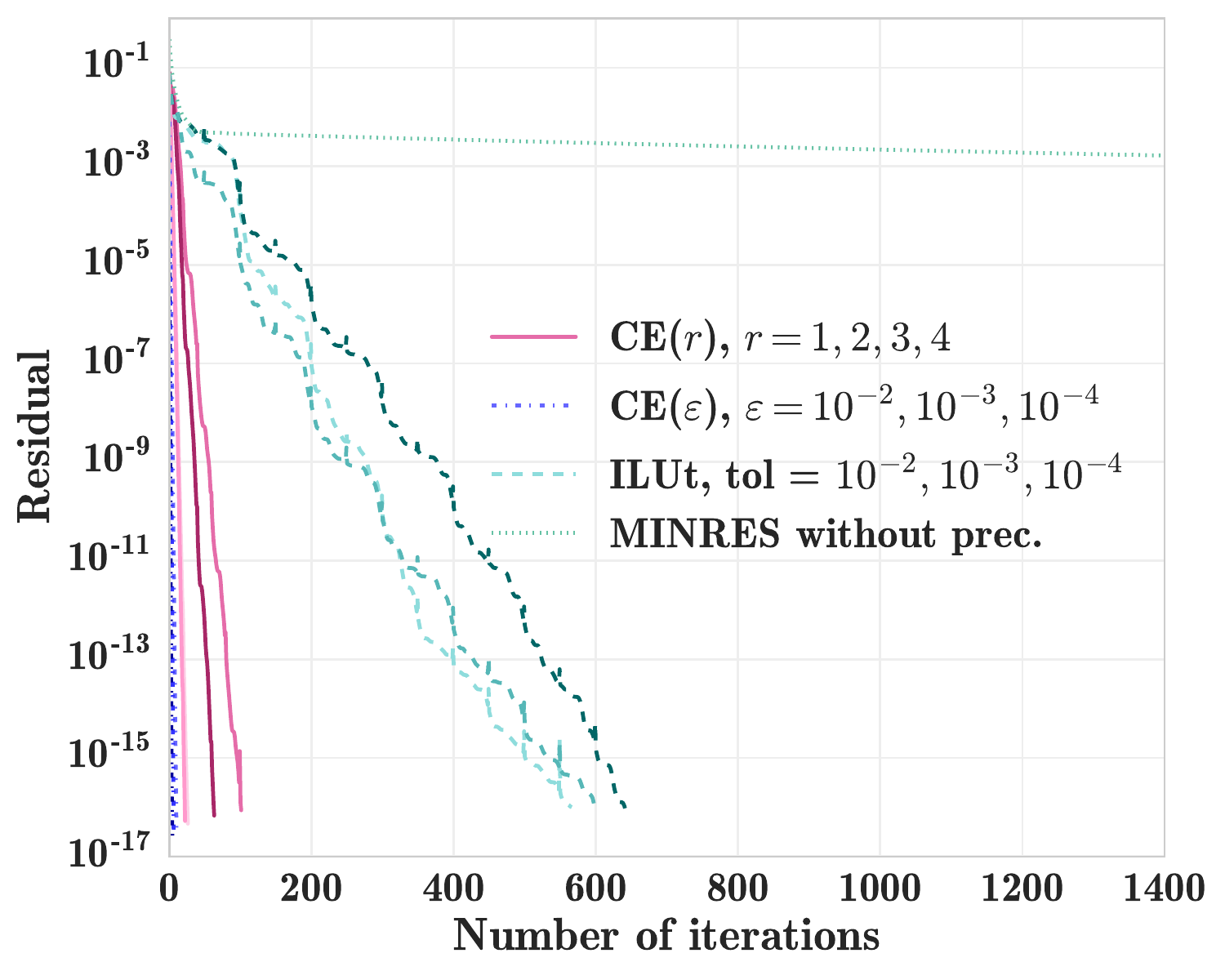}}
  \caption{Residual/iterations}
  \label{fig:hist_eps_r2}
\end{subfigure}
\caption{Convergence comparison of MINRES with different preconditioners, $N~=~32768$. {Note that the line style and the line color on this figure mark the type of the preconditioner, different parameters have different color tones, from dark to light in the list of parameters.}}
        \label{fig:conv}
\end{figure}$\\$
{
\begin{remark}
Note that the number of MINRES iterations without the preconditioner as well as the number of MINRES iterations with the ILUt preconditioner grows as the mesh size increases, because the matrix becomes increasingly ill-conditioned. CE($\varepsilon$) and CE($r$) are much better preconditioners. For CE($\varepsilon$) preconditioner with fixed accuracy number of iterations does not grow; for CE($r$) preconditioner growth of the number of iterations is very mild (Table~\ref{tab:num_it}).
\end{remark}

\begin{table}[H]
\centering
\begin{tabular}{ l | c  c  c  c  c }
Matrix size,~$N$& 8192 & 16384 & 32768 & 65536 & 131072\\
\hline
Without preconditioner & 147 & 2636 & $>1000$ & $>1000$ & $>1000$ \\
ILUt, $t=10^{-3}$    & 40 & 92 & 339 & $>1000$ & $>1000$ \\
CE($r$), $r = 4$  & 23 & 25 & 29 & 30 & 36 \\
CE($\varepsilon$), $\varepsilon=10^{-3}$ & 4 & 5 & 6 & 5 & 6\\
\end{tabular}
\caption{The number of iterations required for MINRES to the converge to accuracy $\epsilon = 10^{-10}$ for different preconditioners.}
\label{tab:num_it}
\end{table}
}

{ We have} the standard trade off: the smaller the $\varepsilon$ is, the better the convergence is, but the storage grows. This fact is illustrated in Figure~\ref{fig:hist_eps_r}.

\begin{figure}[H]
\centering
\begin{subfigure}{.5\textwidth}
  \centering
  \scalebox{0.25}{
    \includegraphics{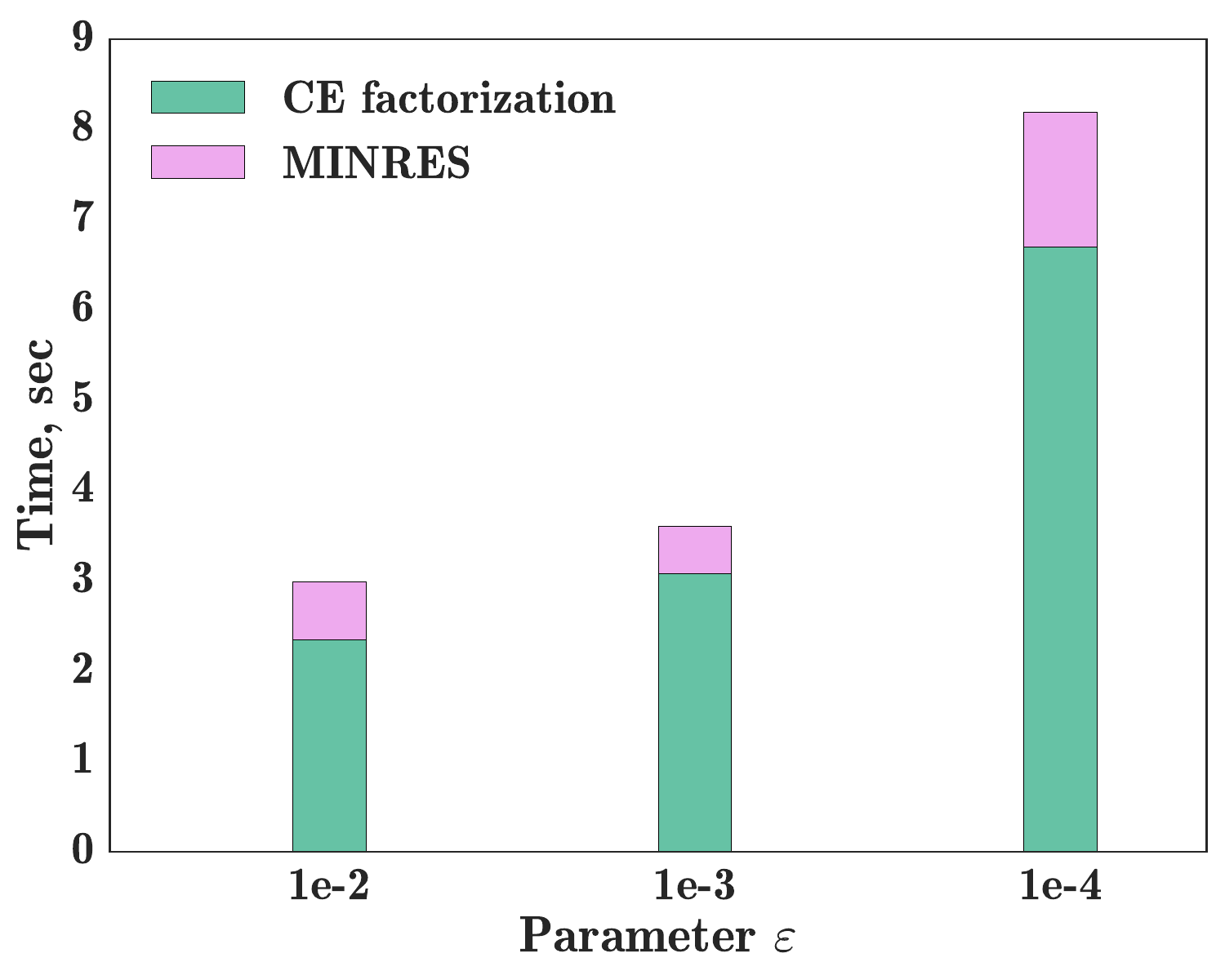}}
  \caption{Adaptive preconditioner CE($\varepsilon$)}
  \label{fig:hist_eps_r1}
\end{subfigure}%
\begin{subfigure}{.5\textwidth}
  \centering
  \scalebox{0.25}{\includegraphics{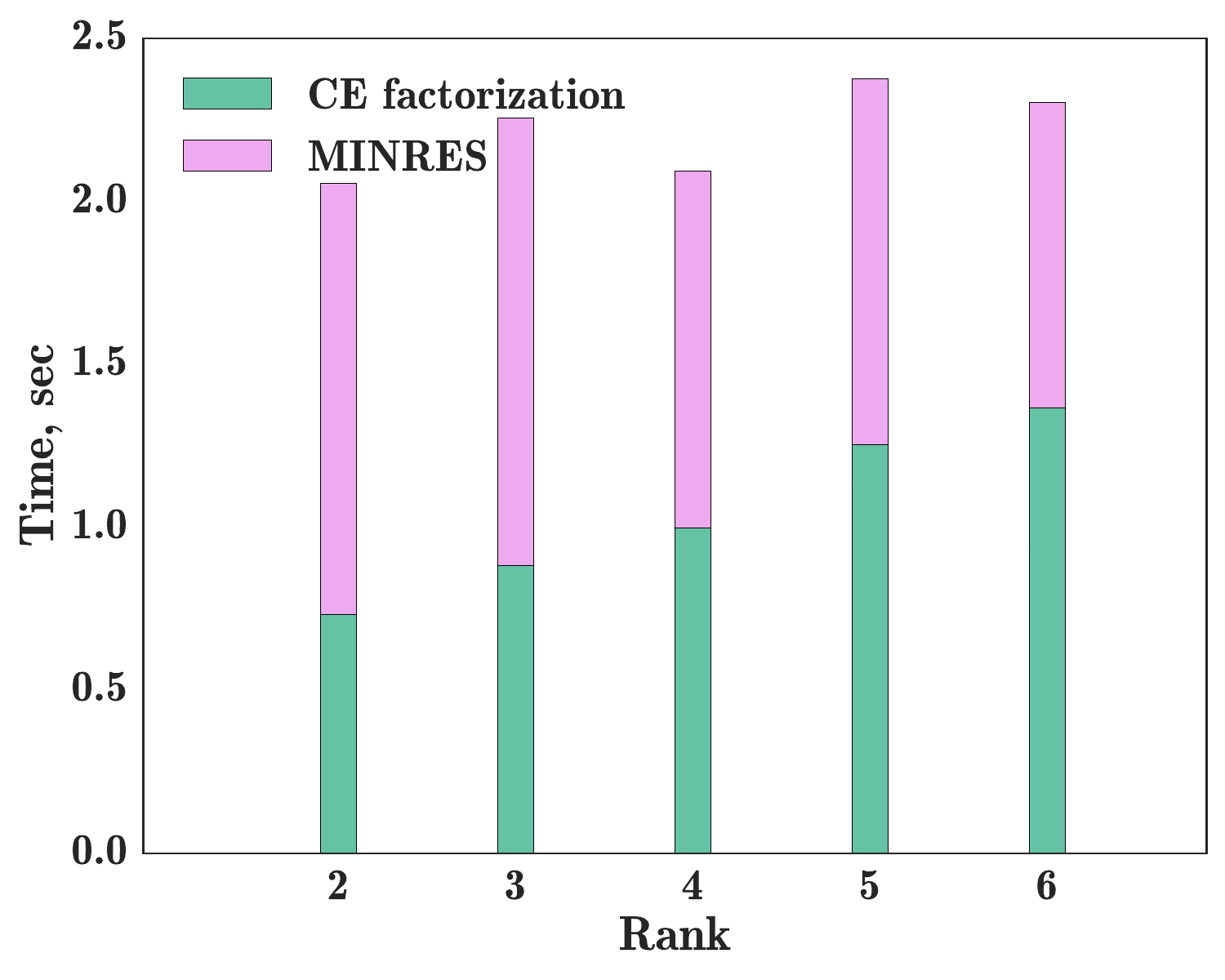}}
  \caption{Fixed-rank preconditioner CE($r$)}
  \label{fig:hist_eps_r2}
\end{subfigure}
\caption{Factorization and MINRES time, $N = 32768$}
\label{fig:hist_eps_r}
\end{figure}$\\$

Note that in Figure~\ref{fig:hist_eps_r2} the total time for different ranks is the same, but { for} $r=2$ less memory is required.
Also note that CE($\varepsilon$) and CE($r$) have comparable timings so let us compare their memory requirements for different $N$.

\begin{figure}[H]
\centering
\begin{subfigure}{.5\textwidth}
  \centering
  \scalebox{0.25}{\includegraphics{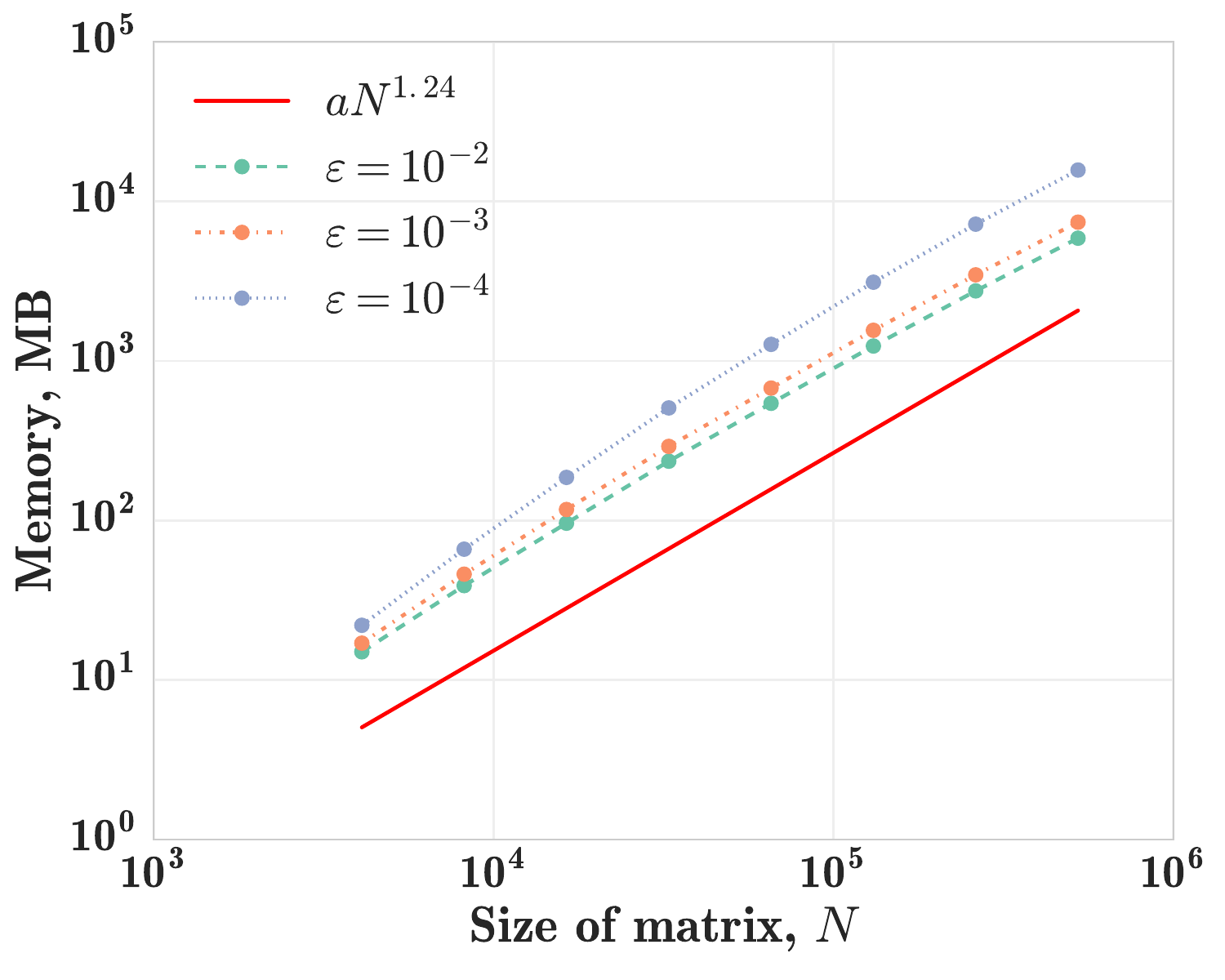}}
  \caption{Adaptive preconditioner CE($\varepsilon$)}
  \label{fig:sub1}
\end{subfigure}%
\begin{subfigure}{.5\textwidth}
  \centering
  \scalebox{0.25}{\includegraphics{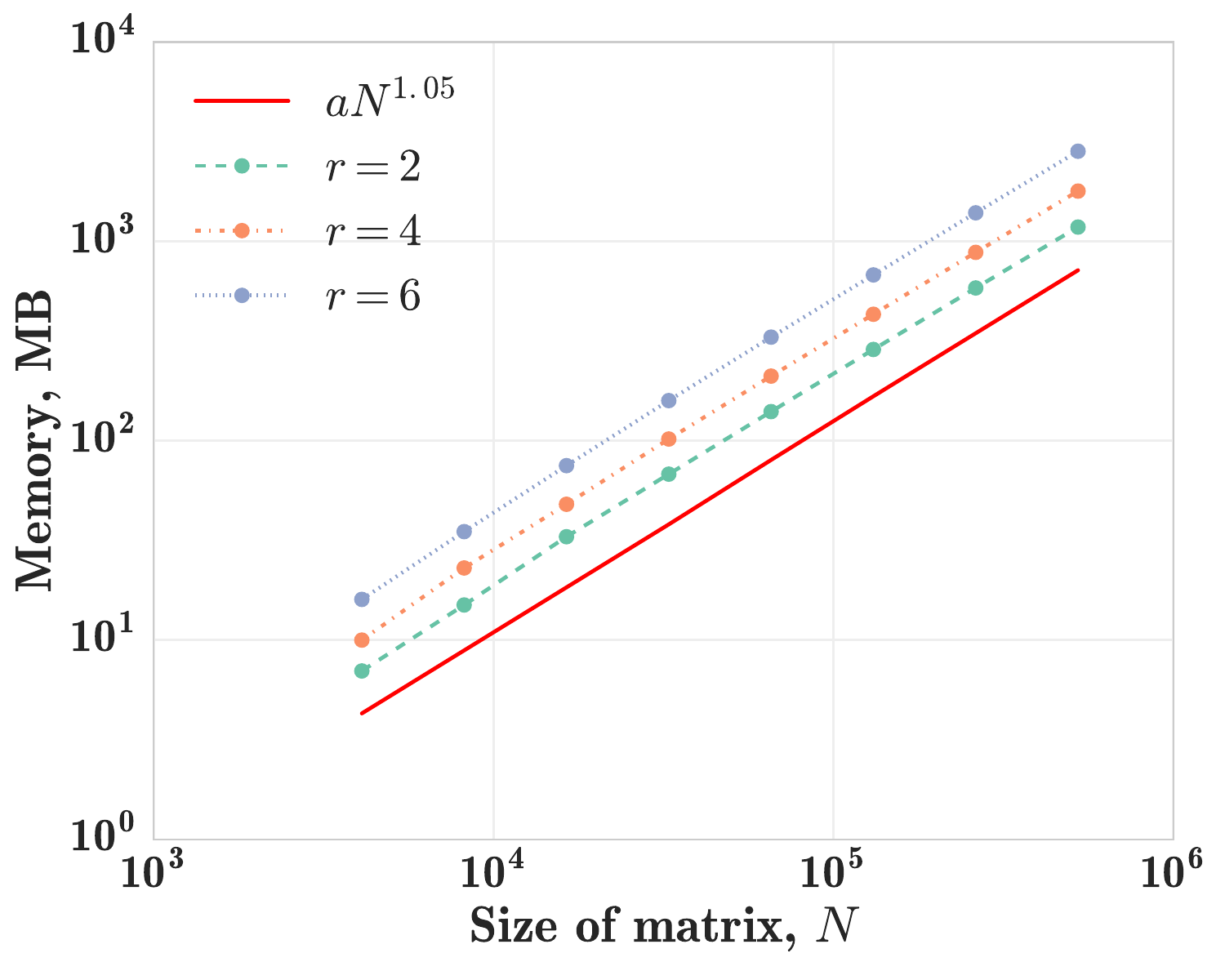}}
  \caption{Fixed-rank preconditioner CE($r$)}
  \label{fig:sub2}
\end{subfigure}
\caption{Memory requirements for approximate factorization}
\label{fig:test}
\end{figure}$\\$

The memory requirements for the fixed-rank preconditioner are predictably lower than for the adaptive ones. In Figure \ref{fig:tot_sol} the total time required for CE($\varepsilon$) and CE($r$) preconditioners and MINRES are shown for different $N$.
\begin{figure}[H]
\centering
\begin{subfigure}{.5\textwidth}
  \centering
  \scalebox{0.25}{\includegraphics{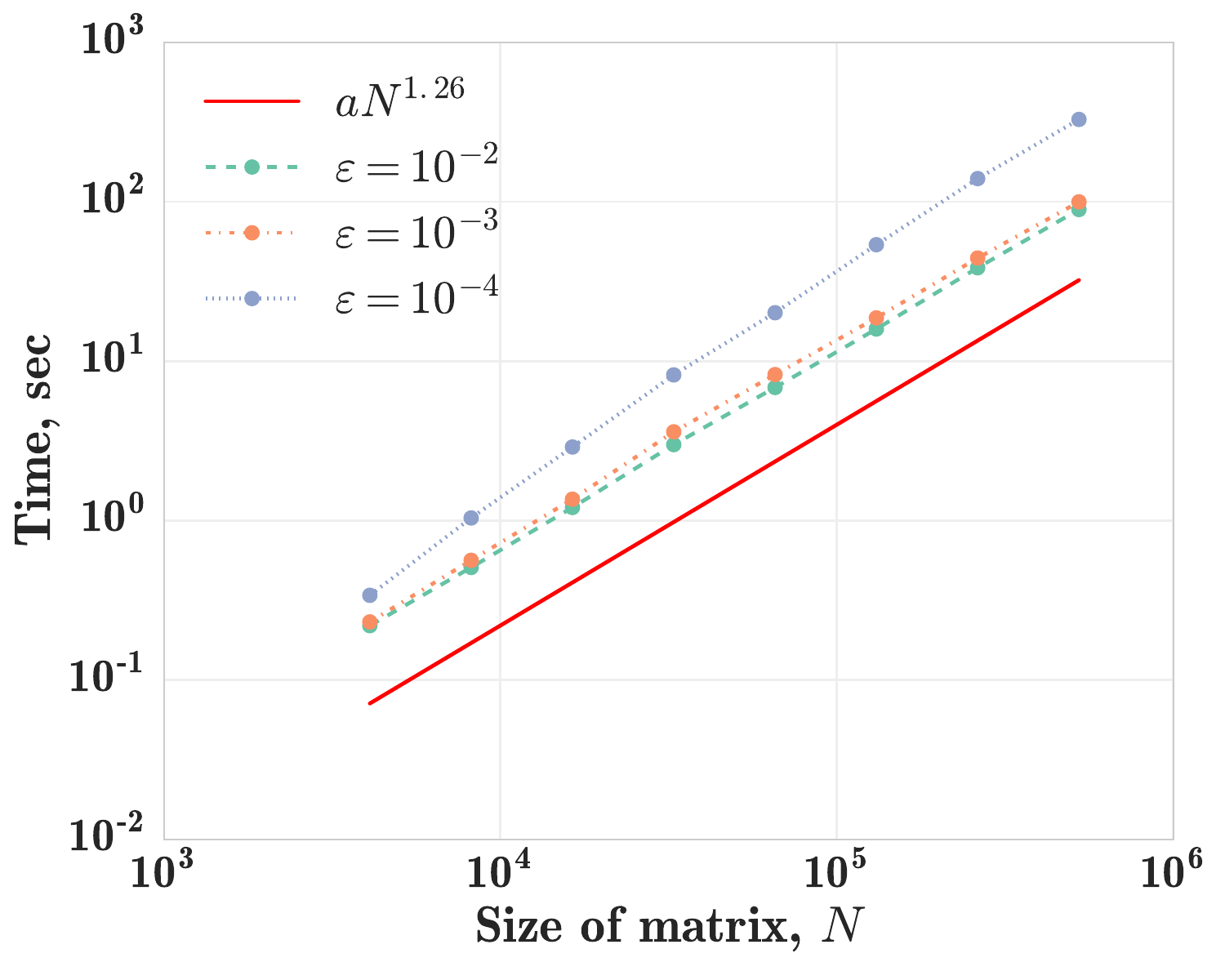}}
  \caption{Adaptive preconditioner CE($\varepsilon$)}
  \label{fig:sub1}
\end{subfigure}%
\begin{subfigure}{.5\textwidth}
  \centering
  \scalebox{0.25}{\includegraphics{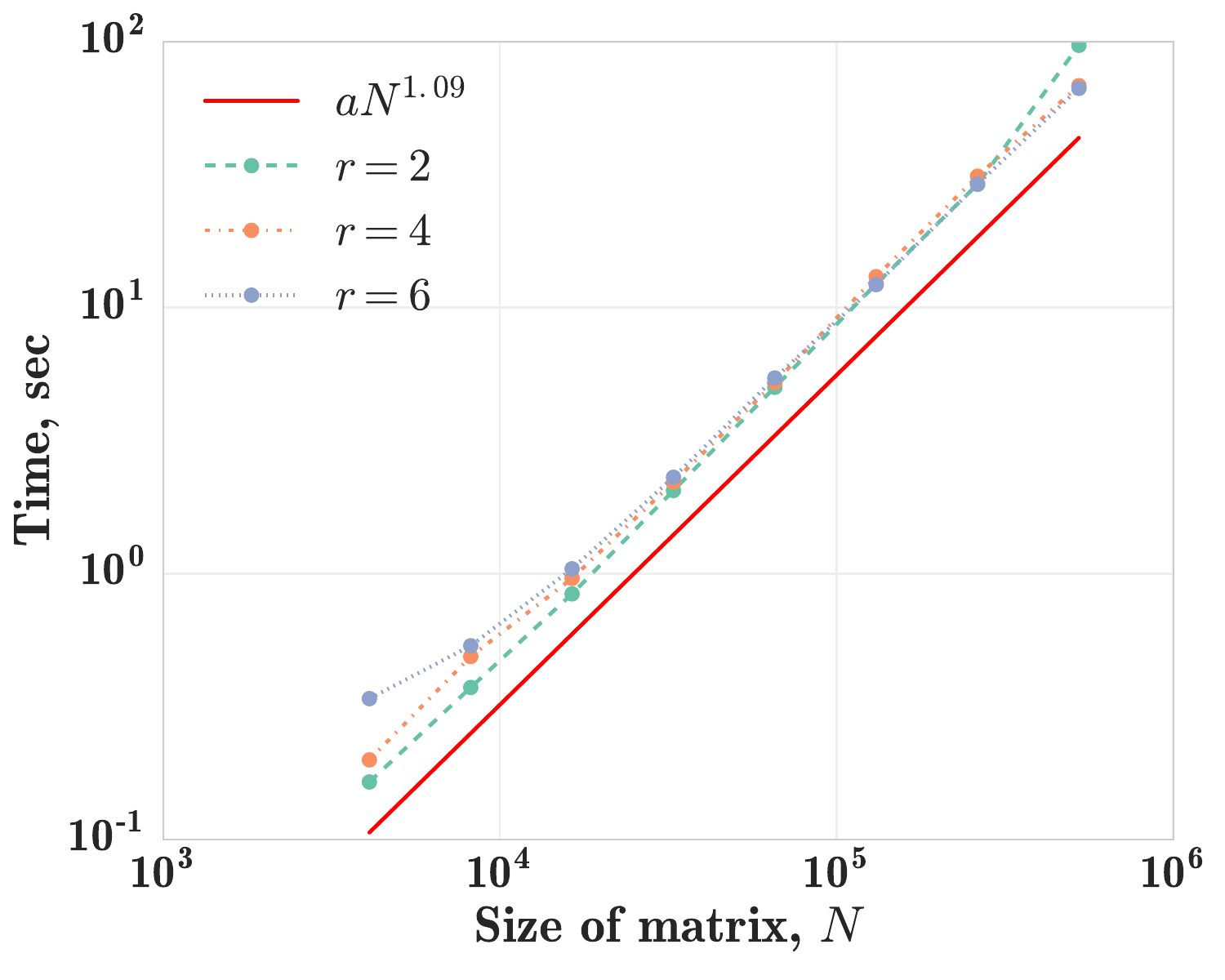}}
  \caption{Fixed-rank preconditioner CE($r$)}
  \label{fig:sub2}
\end{subfigure}
\caption{Total solution time for MINRES with CE($\varepsilon$) and CE($r$) preconditioners}
\label{fig:tot_sol}
\end{figure}$\\$

We have found experimentally that for the CE($r$) preconditioner $r = 4$ is typically a good choice, note that in this case $r = \frac{B}{2}$. For the CE($\varepsilon$) preconditioner a good choice is $\varepsilon = 10^{-2}$ { (for this particular example).}

\subsubsection{Final comparison}
Let us now compare CE($\varepsilon$) solver, CE($r$) solver, CHOLMOD and MINRES with ILUt preconditioner. {Since MINRES with ILUt does not converge in 1000 iterations for $N>32768$, we show its performance only for the points where it converged.}

\begin{figure}[H]
\centering
\begin{subfigure}{.5\textwidth}
  \centering
  \scalebox{0.25}{\includegraphics{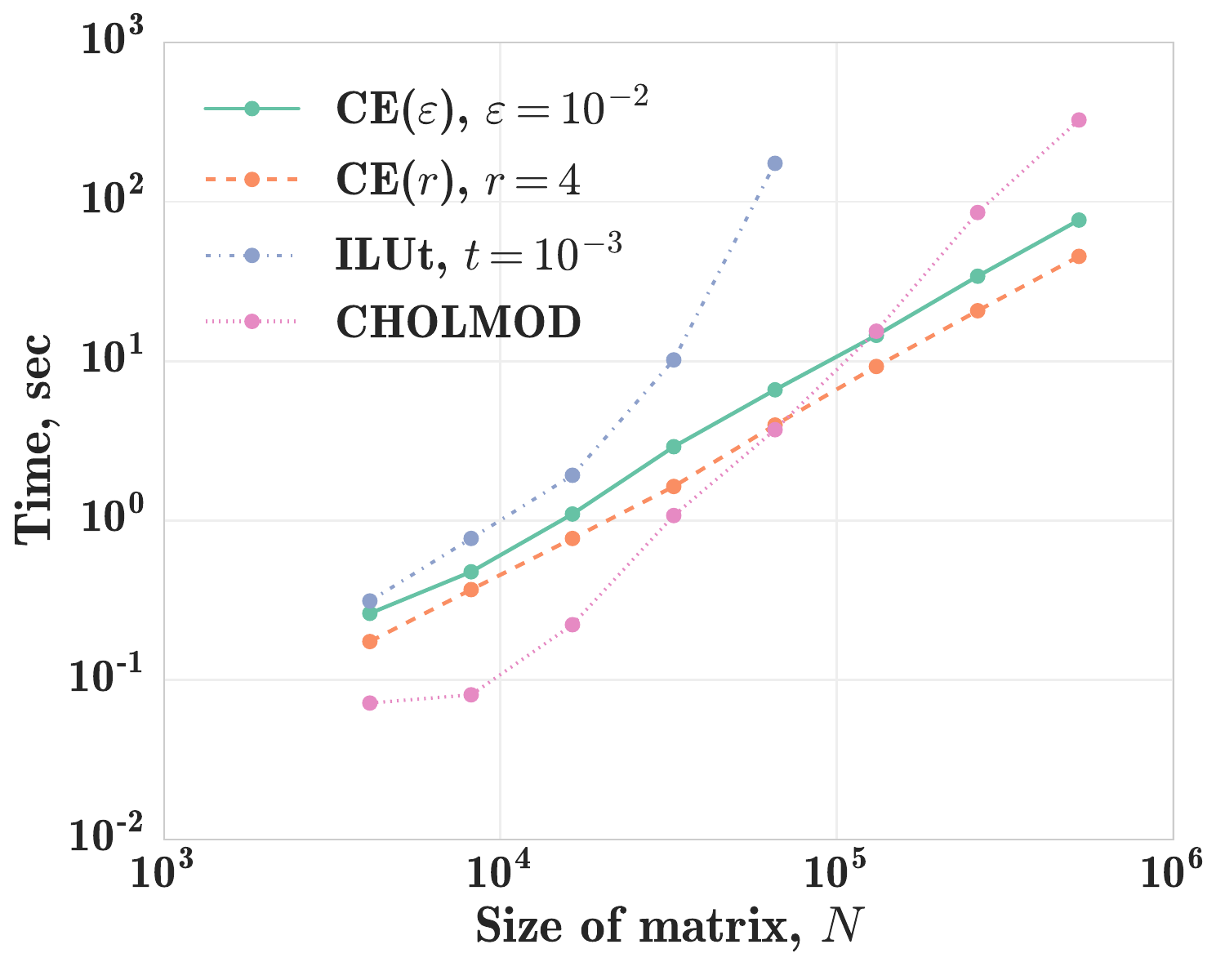}}
  \caption{Time comparison.}
  \label{fig:fin_t}
\end{subfigure}%
\begin{subfigure}{.5\textwidth}
  \centering
  \scalebox{0.25}{\includegraphics{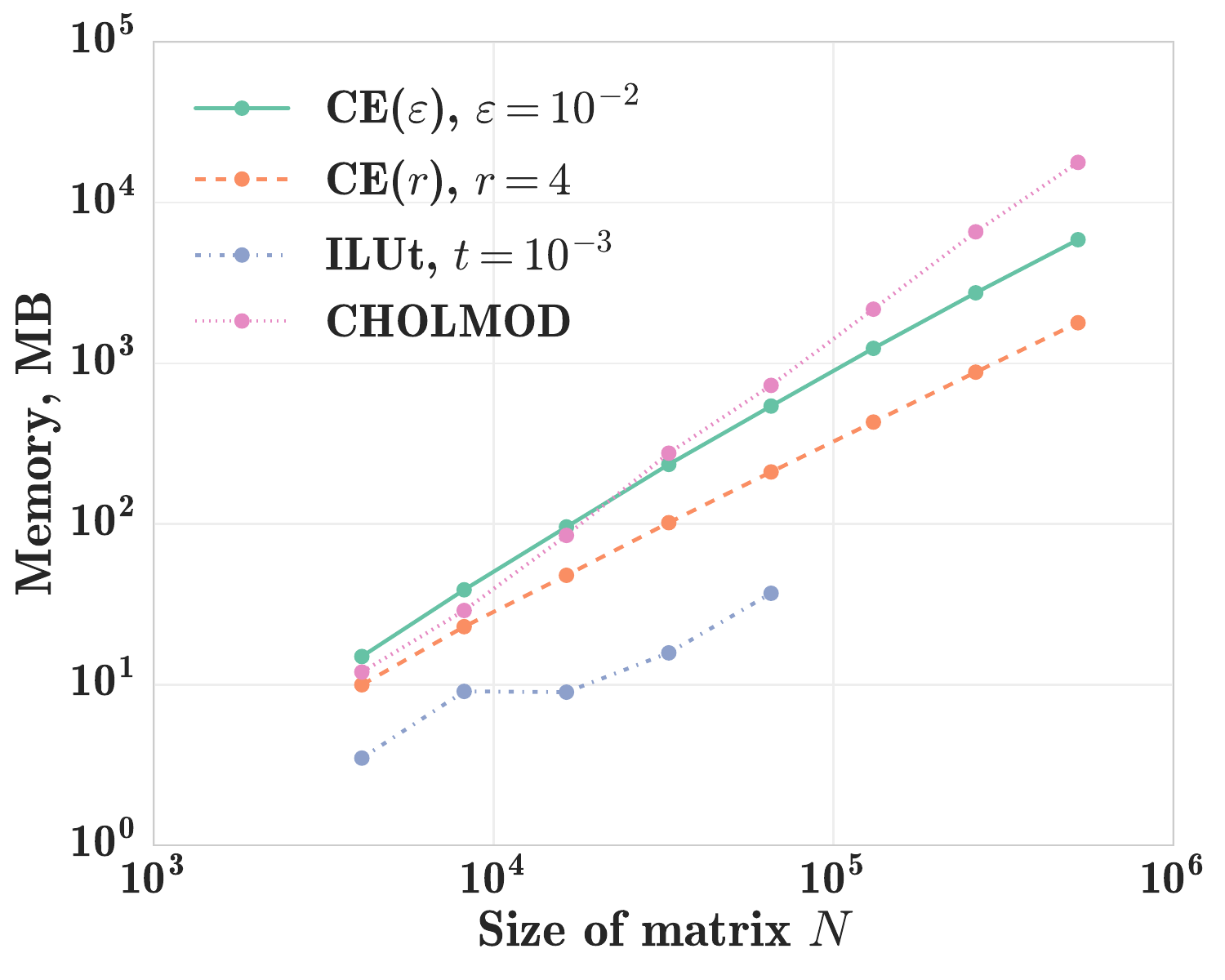}}
  \caption{Memory comparison}
  \label{fig:fin_m}
\end{subfigure}
\caption{Final solvers comparison (note logarithmic scales).}
\label{fig:fin}
\end{figure}$\\$

Note that CE($r$) solver starts to use less memory than CHOLMOD at $N \approx 5000$ and starts to work faster at  $N \approx 10000$. For the largest $N$ our hardware permits we are able to solve the system $\sim10$ times faster than CHOLMOD, and require $\sim10$ times less memory. Note that the accuracy of CHOLMOD solution is $10^{-11}$.

\section{Related work}

There are two big directions in fast solvers for sparse linear systems: sparse (approximate) factorizations and low-rank based solvers. CE-algorithm has the flavor of both.

The main difference between CE-algorithm and the classical direct sparse LU-like solvers {(e.g. block Gaussian elimination \cite{bank-blockLU-1976},
multifrontal method \cite{Aminfar-multifrontal-2015} and
others \cite{davis-UMFPACK-2004,ng-supernodal-1993,Davis-cholmod-2008,chadwick-rank_str_chol-2015}) is that in our algorithm fill-in growth is controlled while maintaining the accuracy, thanks to the additional compression procedure.
This leads to big advantage in memory usage and probably makes CE-algorithm asymptotically faster, but this point requires additional analysis. Note that references \cite{Aminfar-multifrontal-2015} and \cite{chadwick-rank_str_chol-2015} use compressed representations (i.e.
they are not just using the graph structure of the matrix, as is the case
for the other solvers).}

CE-algorithm is close to \emph{hierarchical solvers}.
These solvers utilize the \emph{hierarchical structure} of matrices $A$ and/or inverse matrix $A^{-1}$.
For example  $\mathcal{H}$ direct solvers \cite{Hakb-h-lib,martinsson-h_solver-2009,bebendorf-existence-2003, Beb-hlu-2005, grasedyck-hlu-2008} build the $LU$ factorization and/or the approximation of the inverse $A^{-1}$ in the $\mathcal{H}$ and $\mathcal{H}^2$ formats. { Another example of hierarchical inversion methods is divide and conquer sparse preconditioning} \cite{li-di_and_con-2012}. This type of solvers has provable linear complexity for matrices coming as discretization of PDEs, but the constant hidden in $\mathcal{O}(N)$ can be quite large, making them non-competitive.
{Hierarchical inversion methods ($\mathcal{H}$-LU method, divide and conquer sparse preconditioning, etc.) and CE solver are based on similar high-level ideas, but the algorithms are different:hierarchical inversion solvers utilize recursion, while CE algorithm is going from smallest to largest blocks.}

Recently, so-called \emph{HSS-solvers} have attracted a lot of attention. To name few references: \cite{chandrasekaran-HSSsolver-2006,ChMing-dir_hss-2006, Ho-dir_hss-2012,rouet-hss-2015,martinsson-hss_integr-2005}. As a subclass of such solvers, $HODLR$ direct solvers \cite{ambikasaran-hodlersolver-2013,aminfar-lodlersolver-2016,kong-lodlersolver-2011} have been introduced. These solvers compute approximate sparse LU-like factorization of HSS-matrices. The simplicity of the structure in comparison to the general $\mathcal{H}^2$ structure allows for a very efficient implementation, and despite the fact that these matrices are essentially 1D-$\mathcal{H}^2$ matrices and optimal linear complexity is not possible for the matrices that are discretization of 2D/3D PDEs the running times can be quite impressive.

{This work is closely related to the work by Ying and Ho with skeletonization: they use similar idea but compress all off-diagonal blocks, while we compress only the far away ones \cite{ho-skelet_i-2015,ho-skelet_d-2015, li-int-2016}. }

The $\mathcal{H}$-matrices have close connection to the fast multipole method (FMM), and an \emph{inverse FMM solver} \cite{Ambikasaran-dir-h2-2014,coulier_ifmm_2015} aimed
at the inversion of integral transforms given by the FMM-structure. This method can be also adapted to the sparse matrix case as in paper \cite{pouransari_ss_2015}. The CE-algorithm has a simpler logic and structure, similar to the standard LU-factorization techniques accompanied by the compression procedure.

\section{Conclusions and future work}
We have proposed a new approximate factorization for sparse SPD matrices that can be computed with linear cost and storage, but provides better approximation to the matrix than standard incomplete LU factorization techniques. The constant is quite small compared to other low-rank based approaches. In our model experiments we were able to outperform CHOLMOD software in terms of memory cost and computational time. There are several directions for future research. At the moment, the permutation is assumed to be known by the start of the algorithm. In many cases it can be retrieved from the geometric information. However, it would be interesting to develop the version that computes the next row to be eliminated in the adaptive way.
Another important topic is to extend the solver for the non-symmetric case and also for dense $\mathcal{H}^2$ matrices. It is quite straightforward, but a lot of questions are still open. Finally, the efficient implementation of CE requires many small optimization of the original prototype code: in the current version most of the time is spent in the SVD procedure in the compression step, and using a cheaper rank-revealing factorization may significantly reduce the computational time. In the end, a much more broad comparison with other available solvers is needed to see the limitations of our approach in practice, as well as the theoretical justification of the low-rank property of the far blocks that appear in the process.

\section*{Acknowledgments}

The authors would like to thank Maxim Rakhuba, Igor Ostanin, Alexander Novikov, Alexander Katrutsa and Marina Munkhoeva for {\color{royalblue} their comments on an earlier version of this paper. We would also like to show our gratitude to the anonymous reviewer for the idea to illustrate the CE algorithm with expanded residual graphs, Section~\ref{sec:erg} is based on this idea.}

\bibliography{lib}
\end{document}